\date{\today}
\def\theequation{\@arabic\c@equation}
\newcommand{\oT}{H}
\newcommand{\bbN}{{\mathbb{N}}}
\newcommand{\bbR}{{\mathbb{R}}}
\newcommand{\bbQ}{{\mathbb{Q}}}
\newcommand{\bbC}{{\mathbb{C}}}
\newcommand{\cB}{{\mathcal B}}
\newcommand{\cF}{{\mathcal F}}
\newcommand{\cH}{{\mathcal H}}
\newcommand{\cI}{{\mathcal I}}
\newcommand{\cK}{{\mathcal K}}
\newcommand{\cM}{{\mathcal M}}
\newcommand{\cN}{{\mathcal N}}
\newcommand{\cS}{{\mathcal S}}
\newcommand{\cV}{{\mathcal V}}
\newcommand{\no}{\nonumber}
\newcommand{\lb}{\label}
\newcommand{\f}{\frac}
\newcommand{\ul}{\underline}
\newcommand{\ol}{\overline}
\newcommand{\ti}{\tilde}
\newcommand{\wti}{\widetilde}
\newcommand{\al}{\alpha}
\newcommand{\Oh}{O}
\newcommand{\loc}{\text{\rm{loc}}}
\newcommand{\ran}{\operatorname{ran}}
\newcommand{\dom}{\operatorname{dom}}
\newcommand{\supp}{\operatorname{supp}}
\renewcommand{\Re}{\operatorname{Re}}
\renewcommand{\Im}{\operatorname{Im}}
\newcommand{\bi}{\bibitem}
\newcommand{\hatt}{\widehat}
\DeclareMathOperator*{\slim}{s-lim}
\numberwithin{equation}{section}
\newtheorem{theorem}{Theorem}[section]
\newtheorem{lemma}[theorem]{Lemma}
\newtheorem{hypothesis}[theorem]{Hypothesis}
\theoremstyle{definition}
\newtheorem{definition}[theorem]{Definition}
\newtheorem{remark}[theorem]{Remark}
\begin{document}

\title[Model Hilbert Spaces]{On a class of Model Hilbert Spaces}
\author[F.\ Gesztesy, R.\ Weikard, and M.\ Zinchenko]{Fritz
Gesztesy, Rudi Weikard, and Maxim Zinchenko}

\address{Department of Mathematics,
University of Missouri, Columbia, MO 65211, USA}
\email{gesztesyf@missouri.edu}
\urladdr{http://www.math.missouri.edu/personnel/faculty/gesztesyf.html}

\address{Department of Mathematics, University of
Alabama at Birmingham, Birmingham, AL 35294, USA}
\email{rudi@math.uab.edu}
\urladdr{http://www.math.math.uab.edu/\~{}rudi/index.html}

\address{Department of Mathematics,
University of Central Florida, Orlando, FL 32816, USA}
\email{maxim@math.ucf.edu}
\urladdr{http://www.math.ucf.edu/~maxim/}

\dedicatory{Dedicated with great pleasure to Jerry Goldstein on the occasion of his 70th birthday}
\date{\today}
\thanks{Based upon work partially supported by the US National Science
Foundation under Grant No.\ DMS-DMS 0965411.}
\subjclass[2010]{Primary: 28B05, 46E40. Secondary: 46B22, 46G10.}
\keywords{Direct integrals of Hilbert spaces, model Hilbert spaces.}

\begin{abstract}
We provide a detailed description of the model Hilbert space 
$L^2(\bbR; d\Sigma; \cK)$, were $\cK$ represents a complex, 
separable Hilbert space, and $\Sigma$ denotes a bounded operator-valued measure.
In particular, we show that several alternative approaches to such a construction 
in the literature are equivalent. 

These spaces are of fundamental importance in the context of perturbation theory 
of self-adjoint extensions of symmetric operators, and the spectral theory of ordinary differential operators with operator-valued coefficients. 
\end{abstract}

\maketitle


\section{Introduction} \lb{s1}

The principal purpose of this note is to recall and elaborate on the construction of 
the model Hilbert space $L^2(\bbR; d\Sigma; \cK)$ and related Banach spaces 
$L^p(\bbR; w d\Sigma; \cK)$, $p \geq 1$. Here $\cK$ represents a complex, 
separable Hilbert space, $\Sigma$ denotes a bounded operator-valued measure, and 
$w$ is an appropriate scalar nonnegative weight function. 
This model Hilbert space is known to play a fundamental role in various applications 
such as, perturbation theory of self-adjoint operators, the theory of self-adjoint 
extensions of symmetric operators, and the spectral theory of ordinary differential 
operators with operator-valued coefficients (cf.\ the end of Section \ref{s2}). 

In Section \ref{s2} we describe in detail the construction of 
$L^2(\bbR; d\Sigma; \cK)$ following the approach used in \cite{GKMT01}. We 
actually will present a slight generalization to the effect that we now explicitly permit 
that the bounded operator $T = \Sigma(\bbR)$ in $\cK$ has a nontrivial null 
space. In the last part of Section \ref{s2} we will show that our construction is 
equivalent to alternative constructions employed by 
Berezanskii \cite[Sect.\ VII.2.3]{Be68} and another 
approach originally due to Gel'fand--Kostyuchenko \cite{GK55} and 
Berezanskii \cite[Ch.\ V]{Be68}.  

It is a somewhat curious fact that in the alternative construction due to 
Berezanskii \cite[Sect.\ VII.2.3]{Be68} one has a choice in the order in  
which one takes a certain completion and a quotient with respect to a semi-inner product. 
In fact, different authors frequently chose one or the other of these two different 
routes without commenting on the equivalence of these two possibilities. Thus, we 
prove their equivalence in Appendix \ref{sA}.  

We now briefly comment on the notation used in this paper: Throughout, 
$\cH$ and $\cK$  denote separable, complex Hilbert spaces, the inner product and norm in $\cH$ are denoted by $(\cdot,\cdot)_{\cH}$ (linear in the second argument) and
$\|\cdot \|_{\cH}$, respectively. The identity operator in $\cH$ is written as 
$I_{\cH}$. We denote by $\cB(\cH)$ the Banach space of linear bounded 
operators in $\cH$. The domain, range, kernel (null space)
of a linear operator will be denoted by $\dom(\cdot)$,
$\ran(\cdot)$, $\ker(\cdot)$, 
respectively. The closure of a closable operator $S$ is denoted by $\ol S$.
The Borel $\sigma$-algebra on $\bbR$ is denoted by $\mathfrak{B}(\bbR)$.

\section{Direct Integrals and the Construction of the \\ Model Hilbert Space 
$L^2(\bbR;d\Sigma;\cK)$} \label{s2}

In this section we describe in detail the construction of the model Hilbert 
space $L^2(\bbR; d\Sigma; \cK)$ (and related Banach spaces 
$L^p(\bbR; w d\Sigma; \cK)$, $p \geq 1$, $w$ an appropriate scalar 
nonnegative weight function) following (and extending) a method first described 
in \cite{GKMT01}. 

As general background literature for the topic to follow, we refer to the theory of 
direct integrals of Hilbert spaces as presented, for instance, in \cite[Ch.\ 4]{BW83},
\cite[Ch.\ 7]{BS87}, \cite[Ch.\ II]{Di96}, \cite[Ch.\ XII]{vN51}.

Throughout this section we make the following assumptions:

\begin{hypothesis} \lb{hD.0}
Let $\mu$ denote a $\sigma$-finite Borel measure on $\bbR$,
$\mathfrak{B}(\bbR)$ the Borel $\sigma$-algebra on $\bbR$,
and suppose that $\cK$ and $\cK_{\lambda}$, $\lambda\in\bbR$, denote
separable, complex Hilbert spaces such that the dimension function 
$\bbR \ni \lambda \mapsto dim (\cK_{\lambda}) \in \bbN \cup \{\infty\}$ 
is $\mu$-measurable.
\end{hypothesis}

Assuming Hypothesis \ref{hD.0}, let
$\cS(\{\cK_\lambda\}_{\lambda\in\bbR})$
be the vector space associated with the Cartesian product 
$\prod_{\lambda\in\bbR}\cK_\lambda$ equipped with
the obvious linear structure. Elements of
$\cS(\{\cK_\lambda\}_{\lambda\in\bbR})$ are maps
\begin{equation}
f \in \cS(\{\cK_\lambda\}_{\lambda\in\bbR}), \quad 
\bbR\ni\lambda\mapsto f(\lambda)\in\cK_{\lambda},
\lb{D.1}
\end{equation}
in particular, we identify $f = \{f(\lambda)\}_{\lambda \in \bbR}$. 

\begin{definition} \lb{dD.1}
Assume Hypothesis \ref{hD.0}.
A {\it measurable family of Hilbert spaces} $\cM$
modeled on $\mu$ and
$\{\cK_\lambda\}_{\lambda\in\bbR}$ is a linear
subspace $\cM\subset\cS(\{\cK_\lambda\}
_{\lambda\in\bbR})$ such that $f\in\cM$ if and
only if the map $\bbR\ni\lambda\mapsto (f(\lambda),
g(\lambda))_{\cK_\lambda}\in\bbC$ is
$\mu$-measurable for all $g\in\cM$. \\
Moreover, $\cM$ is said to be generated by some
subset $\cF$, $\cF\subset\cM$, if for every
$g\in\cM$ we can
find a sequence of functions $h_n\in{\rm lin.span}
\{\chi_Bf\in \cS(\{\cK_\lambda\}) \,|\, B\in\mathfrak{B}(\bbR), f\in\cF\}$ with
$\lim_{n\to\infty}\|g(\lambda)-h_n(\lambda)
\|_{\cK_{\lambda}}=0$
$\mu$-a.e.
\end{definition}

The definition of $\cM$ was chosen with its maximality
in mind and we refer to Lemma \ref{lD.3} and for more details
in this respect. An explicit construction of an example of
$\cM$ will be given in Theorem \ref{tD.7}.

\begin{remark} \lb{rD.2}
The following properties are proved in a standard manner:\\
$(i)$ If $f\in\cM$, $g\in\cS(\{\cK_\lambda\}
_{\lambda\in\bbR})$ and $g=f$ $\mu$-a.e.\ then $g\in\cM$.\\
$(ii)$ If $\{f_n\}_{n\in\bbN}\in\cM$,
$g\in\cS(\{\cK_\lambda\}_{\lambda\in\bbR})$
and $f_n(\lambda)\to g(\lambda)$ as $n\to\infty$
$\mu$-a.e.\ (i.e., $\lim_{n\to\infty}\|f_n(\lambda)-
g(\lambda)\|
_{\cK_\lambda}=0$ $\mu$-a.e.) then $g\in\cM$.\\
$(iii)$ If $\phi$ is a scalar-valued $\mu$-measurable
function and $f\in\cM$ then $\phi f\in\cM$.\\
$(iv)$ If $f\in\cM$ then $\bbR\ni\lambda\mapsto
\|f(\lambda)\|_{\cK_\lambda}\in [0,\infty)$ is
$\mu$-measurable.
\end{remark}

\begin{lemma} [\cite{GKMT01}] \lb{lD.3}
Assume Hypothesis \ref{hD.0}. Suppose that 
$\{f_n\}_{n\in\bbN} \subset \cS(\{\cK_\lambda\}_{\lambda\in\bbR})$
is such that\\
$($$\alpha$$)$ $\bbR\ni\lambda\mapsto
(f_m(\lambda),f_n(\lambda))_{\cK_\lambda}\in\bbC$ is
$\mu$-measurable for
all $m,n\in\bbN$.\\
$($$\beta$$)$ For $\mu$-a.e. $\lambda\in\bbR$,
$\ol{{\rm lin.span}\{f_n(\lambda)\}}=\cK_\lambda$. \\
\noindent $($In particular, any orthonormal basis $\{e_n (\lambda)\}_{n\in\bbN}$ 
in $\cK_{\lambda}$ will satisfy $(\alpha)$ and $(\beta)$.$)$ Then setting
\begin{equation}
\cM=\{g\in\cS(\{\cK_\lambda\}_{\lambda\in\bbR})
 \, |\, (f_n(\lambda),g(\lambda))_{\cK_\lambda}
\text{ is }
\mu\text{-measurable for all }n\in\bbN\}, \lb{D.3}
\end{equation}
one has the following facts: \\
$(i)$ $\cM$ is a measurable family of Hilbert spaces.\\
$(ii)$ $\cM$ is generated by $\{f_n\}_{n\in\bbN}$. \\
$(iii)$ $\cM$ is the unique measurable family of
Hilbert spaces
containing the sequence $\{f_n\}_{n\in\bbN}$. \\
$(iv)$ If $\{g_n\}_{n\in\bbN} \subset \cM$ is any sequence satisfying
$(\beta)$ then
$\cM$ is generated by $\{g_n\}_{n\in\bbN}$.
\end{lemma}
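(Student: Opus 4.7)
The plan is to prove (ii) first via a measurable Gram--Schmidt construction, and then to derive (i), (iii), and (iv) from it. As a preliminary, observe that each $f_n$ itself lies in $\cM$: for every $m\in\bbN$, the identity $(f_m(\lambda),f_n(\lambda))_{\cK_\lambda}=\overline{(f_n(\lambda),f_m(\lambda))_{\cK_\lambda}}$ together with hypothesis $(\alpha)$ shows the right-hand side is $\mu$-measurable.

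For (ii), the idea is to apply Gram--Schmidt pointwise in $\lambda$ to the sequence $\{f_n(\lambda)\}$, setting $e_n(\lambda):=0$ whenever $f_n(\lambda)$ lies in the span of $\{f_k(\lambda)\}_{k<n}$ (with the convention $0/0:=0$ in the normalization step). The resulting vectors have the form $e_n(\lambda)=\sum_{k=1}^{n}c_{k,n}(\lambda)\,f_k(\lambda)$, where the scalar coefficients $c_{k,n}$ are rational expressions in the Gram matrix entries $(f_j(\lambda),f_k(\lambda))_{\cK_\lambda}$, hence $\mu$-measurable by $(\alpha)$. By $(\beta)$, $\{e_n(\lambda)\}_{n\in\bbN}$ is an orthonormal basis of $\cK_\lambda$ (once the zero entries are discarded) for $\mu$-a.e.\ $\lambda$. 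Given $g\in\cM$, the Fourier coefficients
\begin{equation*}
(e_n(\lambda),g(\lambda))_{\cK_\lambda}=\sum_{k=1}^{n}\overline{c_{k,n}(\lambda)}\,(f_k(\lambda),g(\lambda))_{\cK_\lambda}
\end{equation*}
are $\mu$-measurable, and the partial sums $\sum_{n=1}^{N}(e_n(\lambda),g(\lambda))_{\cK_\lambda}\,e_n(\lambda)=\sum_{k=1}^{N}d_{k,N}(\lambda)\,f_k(\lambda)$, with $d_{k,N}$ scalar $\mu$-measurable, converge to $g(\lambda)$ in $\cK_\lambda$ as $N\to\infty$ for $\mu$-a.e.\ $\lambda$. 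Approximating each $d_{k,N}$ by simple functions and invoking a standard diagonal argument (using the $\sigma$-finiteness of $\mu$) produces the required sequence $h_n\in{\rm lin.span}\{\chi_B f_k\,|\,B\in\mathfrak{B}(\bbR),\,k\in\bbN\}$ with $\|g(\lambda)-h_n(\lambda)\|_{\cK_\lambda}\to 0$ $\mu$-a.e.

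Given (ii), part (i) is immediate: the nontrivial direction requires, for $g,h\in\cM$, measurability of $\lambda\mapsto (g(\lambda),h(\lambda))_{\cK_\lambda}$; choosing $h_n$ approximating $h$ as in (ii), each $(g,h_n)_{\cK_\lambda}$ is a finite linear combination of measurable functions of the form $\chi_{B_j}(\lambda)\,\overline{(f_{k_j}(\lambda),g(\lambda))_{\cK_\lambda}}$, and by continuity of the inner product the pointwise a.e.\ limit $(g,h)_{\cK_\lambda}$ is then $\mu$-measurable; the converse direction is trivial by taking $h=f_n\in\cM$. For (iii), if $\cM'$ is any measurable family containing $\{f_n\}$, then $g\in\cM'$ forces each $(f_n,g)_{\cK_\lambda}$ to be measurable, giving $\cM'\subseteq\cM$; conversely, for $g\in\cM$ and $h\in\cM'\subseteq\cM$, part (i) yields $(g,h)_{\cK_\lambda}$ measurable, whence $g\in\cM'$ by the defining property of a measurable family. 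Part (iv) is obtained by repeating the Gram--Schmidt construction with $\{g_n\}$ in place of $\{f_n\}$, noting that condition $(\alpha)$ for $\{g_n\}$ is automatic from $\{g_n\}\subset\cM$ via part (i).

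The central obstacle is the measurable Gram--Schmidt step: one must orthogonalize the family $\{f_n(\lambda)\}$ without assuming linear independence or nonvanishing at each individual $\lambda$, while preserving $\mu$-measurability of every intermediate quantity. The convention $0/0:=0$ handles all degeneracies, since each Gram--Schmidt coefficient then becomes a rational function of measurable Gram matrix entries, and the degenerate $\lambda$ contribute only vanishing terms that are harmless in the expansion of $g$ granted by $(\beta)$. The remaining technicalities---simple-function approximation and diagonal extraction---are routine on a $\sigma$-finite measure space.
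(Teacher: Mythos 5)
The paper does not reproduce a proof of Lemma \ref{lD.3}; it is stated with attribution to \cite{GKMT01}, where the argument is precisely the pointwise measurable Gram--Schmidt orthogonalization you employ. Your proposal is correct and follows that same standard route: the preliminary observation that each $f_n\in\cM$, the $0/0:=0$ convention making every Gram--Schmidt coefficient $\mu$-measurable, the Fourier expansion giving (ii), and the derivation of (i), (iii), (iv) from it are all sound, with the only points deserving a line of justification being the measurability of $\lambda\mapsto\|g(\lambda)-h(\lambda)\|_{\cK_\lambda}$ (obtainable by expanding the square and using Parseval for the term $\|g(\lambda)\|^2_{\cK_\lambda}$) and the extraction of an a.e.-convergent diagonal sequence via convergence in measure on a $\sigma$-finite exhaustion, both of which are routine as you indicate.
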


Next, let $w$ be a $\mu$-measurable function,
$w>0$ $\mu$-a.e., and consider the space
\begin{equation}
\dot L^2(\bbR; w d\mu; \cM)= \bigg\{f\in\cM \, \bigg| \,
\int_\bbR w(\lambda)
d\mu(\lambda) \, \|f(\lambda)\|^2_{\cK_\lambda}
<\infty\bigg\}
\lb{D.6}
\end{equation}
with its obvious linear structure. On
$\dot L^2(\bbR; w d\mu; \cM)$ one defines a semi-inner
product
$(\cdot,\cdot)_{\dot L^2(\bbR; w d\mu; \cM)}$ (and hence a
semi-norm $\|\cdot\|_{\dot L^2(\bbR; w d\mu; \cM)}$) by
\begin{equation}
(f,g)_{\dot L^2(\bbR; w d\mu; \cM)}=\int_\bbR
w(\lambda) d\mu(\lambda) \,
(f(\lambda),g(\lambda))_{\cK_\lambda}, \quad
f,g\in \dot L^2(\bbR; w d\mu; \cM). \lb{D.7}
\end{equation}
That \eqref{D.7} defines a semi-inner product
immediately follows
from the corresponding properties of
$(\cdot,\cdot)_{\cK_\lambda}$ and the linearity of
the integral. Next, one defines the equivalence relation
$\sim$, for elements $f,g\in \dot L^2(\bbR; w d\mu; \cM)$ by
\begin{equation}
f\sim g \text{ if and only if } f=g \quad \text{$\mu$-a.e.}
\lb{D.7a}
\end{equation}
and hence introduces the set of equivalence classes of
$\dot L^2(\bbR; w d\mu; \cM)$ denoted by
\begin{equation}
L^2(\bbR; w d\mu; \cM)=\dot L^2(\bbR; w d\mu; \cM)/\sim .
\lb{D.7b}
\end{equation}
In particular, introducing the subspace of null functions
\begin{align}
\cN(\bbR; w d\mu; \cM)&= \big\{f\in \dot L^2(\bbR; w d\mu; \cM)
\,\big|\, \|f(\lambda)\|_{\cK_\lambda}=0 \text{ for }
\text{$\mu$-a.e. }\lambda\in\bbR\big\} \no \\
&= \big\{f\in \dot L^2(\bbR; w d\mu; \cM) \,\big|\,
\|f\|_{\dot L^2(\bbR; w d\mu; \cM)}=0\big\}, \lb{D.7c}
\end{align}
$L^2(\bbR; w d\mu; \cM)$ is precisely the quotient space
$\dot L^2(\bbR; w d\mu; \cM)/\cN(\bbR; w d\mu; \cM)$.
Denoting the equivalence class of
$f\in \dot L^2(\bbR; w d\mu; \cM)$ temporarily by $[f]$, the
semi-inner product on $L^2(\bbR; w d\mu; \cM)$
\begin{equation}
([f],[g])_{L^2(\bbR; w d\mu; \cM)}=\int_\bbR
w(\lambda)d\mu(\lambda) \,
(f(\lambda),g(\lambda))_{\cK_\lambda} \lb{D.7d}
\end{equation}
is well-defined (i.e., independent of the chosen representatives of the equivalence classes) and actually an inner product. Thus, $L^2(\bbR; w d\mu; \cM)$ is a normed space and by the usual abuse of notation we denote its elements in the
following again by $f,g$, etc.\ Moreover, $L^2(\bbR; w d\mu; \cM)$ is also complete: 

\begin{theorem} \lb{tD.4}
Assume Hypothesis \ref{hD.0}. Then the normed space 
$L^2(\bbR; w d\mu; \cM)$ is complete and hence a Hilbert space. In addition, 
$L^2(\bbR; w d\mu; \cM)$ is separable. 
\end{theorem}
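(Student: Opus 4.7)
The plan is to prove completeness via a Riesz--Fischer type argument and separability by exploiting the countable generating family provided by Lemma \ref{lD.3}.

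For completeness, I would start with a Cauchy sequence $\{f_n\}_{n\in\bbN}$ in $L^2(\bbR; wd\mu; \cM)$ and extract a rapidly convergent subsequence $\{f_{n_k}\}_{k\in\bbN}$ with $\|f_{n_{k+1}} - f_{n_k}\|_{L^2} \leq 2^{-k}$. Set $g_N(\lambda) = \sum_{k=1}^N \|f_{n_{k+1}}(\lambda) - f_{n_k}(\lambda)\|_{\cK_\lambda}$, which is $\mu$-measurable by Remark \ref{rD.2}(iv). The scalar triangle inequality in $L^2(\bbR; wd\mu)$ combined with Fatou's lemma (or monotone convergence applied to $g_N^2$) yields $g_\infty := \lim_N g_N \in L^2(\bbR; wd\mu)$, so $g_\infty(\lambda) < \infty$ for $\mu$-a.e.\ $\lambda$. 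On this full-measure set the sequence $\{f_{n_k}(\lambda)\}$ is Cauchy in the complete space $\cK_\lambda$, hence converges to some $f(\lambda) \in \cK_\lambda$; extend $f$ by zero on the exceptional null set. Measurability $f \in \cM$ is then automatic from Remark \ref{rD.2}(ii). A second application of Fatou's lemma, now to $\|f(\lambda) - f_{n_k}(\lambda)\|_{\cK_\lambda}^2 \leq \liminf_{j\to\infty} \|f_{n_j}(\lambda) - f_{n_k}(\lambda)\|_{\cK_\lambda}^2$, gives $\|f - f_{n_k}\|_{L^2} \to 0$ and, via the usual Cauchy argument, $\|f - f_n\|_{L^2} \to 0$. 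In particular $f \in \dot L^2(\bbR; wd\mu; \cM)$ and its equivalence class lies in $L^2(\bbR; wd\mu; \cM)$.

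For separability, I would invoke Lemma \ref{lD.3} to fix a countable generating sequence $\{f_n\}_{n\in\bbN}$ for $\cM$. Because $\mu$ is $\sigma$-finite and $w > 0$ $\mu$-a.e., the sets
\begin{equation}
B_{m,k,l,n} = \{\lambda \in \bbR : |\lambda| \leq m,\ 1/k \leq w(\lambda) \leq k,\ \|f_n(\lambda)\|_{\cK_\lambda} \leq l\} \no
\end{equation}
are Borel, exhaust $\bbR$ up to a $\mu$-null set as $m,k,l \to \infty$, and each $\chi_{B_{m,k,l,n}} f_n$ belongs to $\dot L^2(\bbR; wd\mu; \cM)$. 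Let $\cD$ denote the countable collection of finite $(\bbQ + i\bbQ)$-linear combinations of such truncated generators.

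The remaining task is to verify that $\cD$ is dense. Given $g \in \dot L^2(\bbR; wd\mu; \cM)$, the generating property of $\{f_n\}$ in $\cM$ (Definition \ref{dD.1}) yields a sequence $h_j \in \mathrm{lin.span}\{\chi_B f_n\}$ with $\|g(\lambda) - h_j(\lambda)\|_{\cK_\lambda} \to 0$ $\mu$-a.e.; by truncating each $h_j$ to the sets $B_{m,k,l,n}$ and approximating scalar coefficients by elements of $\bbQ + i\bbQ$, one reduces to members of $\cD$, and the dominated convergence theorem (with dominating function built from $\|g(\cdot)\|_{\cK_\cdot}$ and the truncations) converts the pointwise convergence into $L^2$-convergence. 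The main technical point to watch is this last step: one must arrange the truncations so that a single integrable majorant controls the approximating sequence, which is why the parameters $m, k, l$ are introduced simultaneously. Once this density is established, $L^2(\bbR; wd\mu; \cM)$ is a complete normed space with countable dense subset, hence a separable Hilbert space.
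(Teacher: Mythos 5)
Your completeness argument is a correct and complete Riesz--Fischer proof: the rapidly convergent subsequence, the integrable majorant $g_\infty$ forcing $\{f_{n_k}(\lambda)\}_{k\in\bbN}$ to be Cauchy in $\cK_\lambda$ for $\mu$-a.e.\ $\lambda$ (using that $w>0$ $\mu$-a.e., so $wd\mu$-null sets are $\mu$-null sets), membership of the limit in $\cM$ via Remark \ref{rD.2}\,$(ii)$, and the final Fatou step are all in order. The paper itself does not reproduce a proof of Theorem \ref{tD.4} but defers to \cite{BW83}, \cite{BS87}, \cite{GKMT01}, where essentially this argument is carried out, so on this half you have supplied what the paper omits.

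The separability half, however, has a genuine gap: the countable set $\cD$ you define is not dense. The generating property of $\{f_n\}_{n\in\bbN}$ produces approximants $h_j\in{\rm lin.span}\{\chi_B f_n \,|\, B\in\mathfrak{B}(\bbR),\, n\in\bbN\}$ with \emph{arbitrary} Borel sets $B$, and ``truncating $h_j$ to the sets $B_{m,k,l,n}$'' yields functions of the form $\chi_{B\cap B_{m,k,l,n}}f_n$, which do not belong to $\cD$, since $\cD$ contains only rational combinations of the countably many functions $\chi_{B_{m,k,l,n}}f_n$ themselves. Concretely, take $\cK_\lambda=\bbC$ for all $\lambda$, $\mu$ Lebesgue measure on $[0,1]$, $w\equiv 1$, and the single generator $f_1\equiv 1$: then each $B_{m,k,l,1}$ is $[-m,m]$, so modulo $\mu$-null sets $\cD$ reduces to the rational constant functions on $[0,1]$, which is certainly not dense in $L^2([0,1];d\lambda)$. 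The missing ingredient is a discretization of the Borel sets: one must adjoin characteristic functions of a countable algebra generating $\mathfrak{B}(\bbR)$ (for instance, finite unions of intervals with rational endpoints, intersected with the truncation sets) and invoke the standard fact that on a set of finite $\mu$-measure every Borel set $B$ admits $B'$ in that algebra with $\mu(B\triangle B')<\ve$; only then do the coefficient and set approximations together land in a countable dense family. A secondary, easily repaired point: a $\sigma$-finite Borel measure on $\bbR$ need not be finite on compact sets, so $\{|\lambda|\leq m\}$ does not guarantee $\mu(B_{m,k,l,n})<\infty$, and hence $\chi_{B_{m,k,l,n}}f_n$ need not lie in $\dot L^2(\bbR;wd\mu;\cM)$; the truncation should instead be taken along an exhaustion $\bbR=\bigcup_{j\in\bbN}A_j$ with $\mu(A_j)<\infty$.
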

That $L^2(\bbR; w d\mu; \cM)$ is complete was shown in 
\cite[Subsect.\ 4.1.2]{BW83}, \cite[Sect.\ 7.1]{BS87}, and more recently, in 
\cite{GKMT01}. Separability of $L^2(\bbR; w d\mu; \cM)$ is proved in 
\cite[Sect.\ 7.1]{BS87} (see also \cite[Subsect.\ 4.3.2]{BW83}). 

\begin{remark} \lb{rD.4}
Clearly, the analogous construction then defines the
Banach spaces $L^p(\bbR; w d\mu; \cM)$, $p\geq 1$.
\end{remark}

Thus, $L^2(\bbR; w d\mu; \cM)$ corresponds precisely to the direct 
integral of the Hilbert spaces $\cK_\lambda$ with respect to
the measure $wd\mu$ (see, e.g., \cite[Ch.\ 4]{BW83},
\cite[Ch.\ 7]{BS87}, \cite[Ch.\ II]{Di96}, \cite[Ch.\ XII]{vN51}) and is
frequently denoted by
$\int_{\bbR}^{\oplus} w(\lambda) d\mu(\lambda) \, \cK_{\lambda}$.

\smallskip

Having reviewed the construction of $L^2(\bbR; w d\mu; \cM)=
\int_{\bbR}^{\oplus} w(\lambda) d\mu(\lambda) \, \cK_{\lambda}$ in
connection with a scalar measure $w d\mu$, we now turn to the case
of operator-valued measures and recall the following definition (we refer, for  
instance, to \cite[Sects.\ 1.2, 3.1, 5.1]{BW83}, \cite[Sect.\ VII.2.3]{Be68}, 
\cite[Ch.\ 6]{BS87}, \cite[Ch.\ I]{DU77}, \cite[Ch.\ X]{DS88}, \cite{MM04} for vector-valued 
and operator-valued measures):

\begin{definition} \lb{dA.6}
Let $\cH$ be a separable, complex Hilbert space.
A map $\Sigma:\mathfrak{B}(\bbR) \to\cB(\cH)$, with $\mathfrak{B}(\bbR)$ the
Borel $\sigma$-algebra on $\bbR$, is called a {\it bounded, nonnegative,
operator-valued measure} if the following conditions $(i)$ and $(ii)$ hold: \\
$(i)$ $\Sigma (\emptyset) =0$ and $0 \leq \Sigma(B) \in \cB(\cH)$ for all
$B \in \mathfrak{B}(\bbR)$. \\
$(ii)$ $\Sigma(\cdot)$ is strongly countably additive (i.e., with respect to the
strong operator  \hspace*{5mm} topology in $\cH$), that is,
\begin{align}
& \Sigma(B) = \slim_{N\to \infty} \sum_{j=1}^N \Sigma(B_j)   \lb{A.40} \\
& \quad \text{whenever } \, B=\bigcup_{j\in\bbN} B_j, \, \text{ with } \,
B_k\cap B_{\ell} = \emptyset \, \text{ for } \, k \neq \ell, \;
B_k \in \mathfrak{B}(\bbR), \; k, \ell \in \bbN.    \no
\end{align}
Moreover, $\Sigma(\cdot)$ is called an {\it $($operator-valued\,$)$ spectral
measure} (or an {\it orthogonal operator-valued measure}) if additionally 
the following condition $(iii)$ holds: \\
$(iii)$ $\Sigma(\cdot)$ is projection-valued (i.e., $\Sigma(B)^2 = \Sigma(B)$,
$B \in \mathfrak{B}(\bbR)$) and $\Sigma(\bbR) = I_{\cH}$. 
\end{definition}

In the following, let $\Sigma:\mathfrak{B}(\bbR) \to\cB(\cK)$ be a bounded
nonnegative measure, that is, $\Sigma$ satisfies requirements $(i)$ and $(ii)$ in
Definition \ref{dA.6}. Denoting $T = \Sigma(\bbR)$, one has
\begin{equation}
0 \leq \Sigma(B) \leq T \in \cB(\cK), \quad B \in \mathfrak{B} (\bbR),    \lb{D.8}
\end{equation}
and hence
\begin{equation}
\big\|\Sigma(B)^{1/2} \xi \big\|_{\cK} \leq \big\|T^{1/2} \xi \big\|_{\cK}, \quad \xi \in \cK,    \lb{D.8A}
\end{equation}
shows that
\begin{equation}
\ker(T) = \ker\big(T^{1/2}\big) \subseteq \ker\big(\Sigma(B)^{1/2}\big) = \ker(\Sigma(B)), \quad
B \in \mathfrak{B} (\bbR).    \lb{D.8B}
\end{equation}
We will use the orthogonal decomposition
\begin{equation}
\cK = \cK_0 \oplus \cK_1, \quad \cK_0 = \ker(T), \; \,\cK_1 = \ker(T)^\bot = \ol{\ran(T)},    \lb{D.8C}
\end{equation}
and identify $f_0 = (f_0 \;\; 0)^\top \in \cK_0$ and $f_1 = (0 \; \; f_1)^\top \in \cK_1$. In particular, with
$f = (f_0 \; \; f_1)^\top$, one has $\|f\|_{\cK}^2 = \|f_0\|_{\cK_0}^2 + \|f_1\|_{\cK_1}^2$.
Then $T$ permits the $2\times 2$ block operator representation
\begin{equation}
T = \begin{pmatrix} 0 & 0 \\ 0 & T_1 \end{pmatrix}, \, \text{ with } \,
0 \leq T_1 \in \cB(\cK_1), \;\, \ker(T_1) = \{0\},     \lb{D.8D}
\end{equation}
with respect to the decomposition \eqref{D.8C}. By \eqref{D.8B} one concludes that
$\Sigma(B)$, $B \in \mathfrak{B} (\bbR)$, is necessarily of the form
\begin{equation}
\Sigma(B) = \begin{pmatrix} 0 & D^* \\ D & \Sigma_1 (B) \end{pmatrix},
\, \text{ for some } \, 0 \leq \Sigma_1 (B) \in \cB(\cK_1), \; D \in \cB(\cK_0,\cK_1),
\lb{D.8F}
\end{equation}
with respect to the decomposition \eqref{D.8C}. The computation
\begin{equation}
0 = \Sigma(B) \begin{pmatrix} f_0 \\ 0 \end{pmatrix} 
= \begin{pmatrix} 0 & D^* \\ D 
& \Sigma_1 (B)\end{pmatrix} \begin{pmatrix} f_0 \\ 0 \end{pmatrix} 
= \begin{pmatrix} 0 \\ D f_0 \end{pmatrix},
\quad f_0 \in \cK_0,    \lb{D.8G}
\end{equation}
yields $D=0$ as $f_0 \in \cK_0$ was arbitrary. Thus, $\Sigma(B)$,
$B \in \mathfrak{B} (\bbR)$, is actually also of diagonal form
\begin{equation}
\Sigma(B) = \begin{pmatrix} 0 & 0 \\ 0 & \Sigma_1 (B) \end{pmatrix},
\, \text{ for some } \, 0 \leq \Sigma_1 (B) \in \cB(\cK_1),    \lb{D.8H}
\end{equation}
with respect to the decomposition \eqref{D.8C}.

Moreover, let
$\mu$ be a control measure for $\Sigma$ (equivalently, for $\Sigma_1$), that is,
\begin{equation}
\mu(B)=0 \text{ if and only if } \Sigma(B)=0
\text{ for all } B\in\mathfrak{B}(\bbR). \lb{D.8a}
\end{equation}
(E.g., $\mu(B)=\sum_{n\in\cI}2^{-n}(e_n,
\Sigma(B)e_n)_\cK$, $B\in\mathfrak{B}(\bbR)$,
with $\{e_n\}_{n\in\cI}$ a complete orthonormal
system in $\cK$, $\cI\subseteq\bbN$, an appropriate
index set.)

The following theorem was first stated in \cite{GKMT01} under the implicit assumption that $\Sigma (\bbR) = T = I_{\cK}$. In this paper we now treat the general case $T \in \cB(\cK)$, in particular, 
we explicitly permit the existence of a nontrivial kernel of $T$:

\begin{theorem} \lb{tD.7}
Let $\cK$ be a separable, complex Hilbert space,
$\Sigma:\mathfrak{B}(\bbR) \to\cB(\cK)$ a bounded, nonnegative
operator-valued measure, and $\mu$ a control measure for $\Sigma$.
Then there are separable, complex Hilbert spaces
$\cK_\lambda$,
$\lambda\in\bbR$, a measurable family of Hilbert spaces
${\cM}_\Sigma$
modelled on $\mu$ and $\{\cK_\lambda\}
_{\lambda\in\bbR}$,
and a bounded linear map $\ul \Lambda\in\cB\big(\cK,
L^2(\bbR; d\mu; {\cM}_\Sigma)\big)$, satisfying
\begin{equation}
\|\ul \Lambda\|_{\cB(\cK,L^2(\bbR; d\mu; {\cM}_\Sigma))}
= \big\|T^{1/2}\big\|_{\cB(\cK)},      \lb{D.8b}
\end{equation}
and
\begin{equation}
\ker(\ul \Lambda) = \ker(T),    \lb{D.8c}
\end{equation}
so that the following assertions $(i)$--$(iii)$ hold: \\
$(i)$ For all $B \in \mathfrak{B} (\bbR)$, $\xi,\eta\in\cK$,
\begin{equation}
(\eta,\Sigma(B) \xi)_\cK=\int_B d\mu(\lambda) \,
((\ul \Lambda \eta)(\lambda),(\ul \Lambda \xi)(\lambda))
_{\cK_\lambda},     \lb{D.9}
\end{equation}
in particular,
\begin{equation}
(\eta, T \xi)_\cK=\int_{\bbR} d\mu(\lambda) \,
((\ul \Lambda \eta)(\lambda),(\ul \Lambda \xi)(\lambda))
_{\cK_\lambda}. \lb{D.9a}
\end{equation}
$(ii)$ Let $\cI = \{1,\dots,N\}$ for some $N\in\bbN$, or $\cI = \bbN$. 
$\ul \Lambda(\{e_n\}_{n\in\cI})$ generates ${\cM}_\Sigma$, where
$\{e_n\}_{n\in\cI}$ denotes any
sequence of linearly independent elements in $\cK$
with the property $\ol{{\rm lin.span} \{e_n\}_{n\in\cI}}=\cK$.
In particular, $\ul \Lambda(\cK)$ generates
${\cM}_\Sigma$.\\
$(iii)$  
For all $B \in \mathfrak{B} (\bbR)$ and $\xi\in\cK$,
\begin{equation}
\ul \Lambda(S(B)\xi\big)
= \{\chi_B(\lambda) (\ul \Lambda \xi)(\lambda)\}_{\lambda\in\bbR},      \lb{D.10}
\end{equation}
where $($cf.\ \eqref{D.8D} and \eqref{D.8H}$)$
\begin{equation}
S(B) = \begin{pmatrix} I_{\cK_0} & 0 \\ 0 & T_1^{-1/2} \Sigma_1 (B)^{1/2} \end{pmatrix},
\quad S(\bbR) = I_{\cK},
\end{equation}
with respect to the decomposition \eqref{D.8C}.
\end{theorem}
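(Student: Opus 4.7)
The strategy follows the Radon--Nikodym construction of a direct integral representation used in \cite{GKMT01}, modified to accommodate a nontrivial $\ker(T)$. One first builds scalar Radon--Nikodym densities, assembles them pointwise into a family of Hilbert spaces $\cK_\lambda$, and then invokes Lemma \ref{lD.3} to promote this family to a measurable one.

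Fix a countable linearly independent sequence $\{e_n\}_{n \in \cI}$ with $\ol{{\rm lin.span}\{e_n\}} = \cK$. For each pair $(m,n)$, the complex scalar measure $B \mapsto (e_m, \Sigma(B) e_n)_\cK$ is bounded and absolutely continuous with respect to $\mu$ (since $\mu$ is a control measure for $\Sigma$), so by Radon--Nikodym it admits a density $\rho_{m,n}\in L^1(\bbR;d\mu)$. After discarding a countable union of $\mu$-null sets, I may assume that for every $\lambda\in\bbR$ the matrix $(\rho_{m,n}(\lambda))$ is Hermitian and positive semidefinite, and that the associated quadratic form extends by continuity from ${\rm lin.span}\{e_n\}$ to a bounded positive semidefinite sesquilinear form $(\cdot,\cdot)_\lambda$ on $\cK$. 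Let $\cN_\lambda$ be its null space and $\cK_\lambda$ the Hilbert space completion of $\cK/\cN_\lambda$, and denote by $\pi_\lambda:\cK\to\cK_\lambda$ the canonical quotient map.

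Set $(\ul\Lambda\xi)(\lambda):=\pi_\lambda(\xi)$. By construction $\lambda\mapsto((\ul\Lambda e_m)(\lambda),(\ul\Lambda e_n)(\lambda))_{\cK_\lambda}=\rho_{m,n}(\lambda)$ is $\mu$-measurable, so Lemma \ref{lD.3} supplies a unique measurable family of Hilbert spaces $\cM_\Sigma$ containing $\{\ul\Lambda e_n\}_{n\in\cI}$ and generated by it; together with Lemma \ref{lD.3}\,$(iv)$ this yields assertion $(ii)$. Assertion $(i)$ then follows by sesquilinear/continuous extension from the generators. Setting $B=\bbR$ and $\eta=\xi$ in \eqref{D.9} gives
\begin{equation*}
\|\ul\Lambda\xi\|^2_{L^2(\bbR;d\mu;\cM_\Sigma)} = (\xi,T\xi)_\cK = \|T^{1/2}\xi\|^2_\cK,
\end{equation*}
which simultaneously establishes the norm identity \eqref{D.8b} and the kernel identity \eqref{D.8c}. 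For assertion $(iii)$, the $\cK_0$-component is trivial since $\ul\Lambda$ annihilates $\cK_0$ by \eqref{D.8c} and $S(B)$ preserves $\cK_0$; for $\xi_1\in\cK_1$, the inequality $\Sigma_1(B)\leq T_1$ forces $S(B)|_{\cK_1}=\ol{T_1^{-1/2}\Sigma_1(B)^{1/2}}$ to be a contraction, and one verifies the claimed pointwise equality by pairing both sides with the generating family $\{\ul\Lambda e_n\}$ via $(i)$ and invoking the generation property from $(ii)$ to pass from agreement of sesquilinear functionals to the $\cK_\lambda$-valued identity $\mu$-a.e.

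The main technical obstacle is the consistent construction of the pointwise family $\cK_\lambda$ on a single $\mu$-conull set rather than $\lambda$-by-$\lambda$: one must ensure that the uniform bound $|\rho_{m,n}(\lambda)|\leq C_{m,n}$ (inherited from boundedness of $\Sigma$) and the positive semidefiniteness of $(\rho_{m,n}(\lambda))$ hold together outside one fixed exceptional null set, so that $(\cdot,\cdot)_\lambda$ extends continuously from the countable dense span to all of $\cK$. A secondary obstacle is the bookkeeping with the $\cK_0$-block: one must use the diagonal form \eqref{D.8H} to check that $\pi_\lambda$ annihilates $\cK_0$ for $\mu$-a.e.\ $\lambda$, which is what makes $\ker(\ul\Lambda)=\ker(T)$ an equality rather than an inclusion.
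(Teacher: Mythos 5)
Your overall architecture (Radon--Nikodym densities $\rho_{m,n}$, positivity off a single null set via rational linear combinations, Lemma \ref{lD.3} to produce ${\cM}_\Sigma$, the identity $\|\ul\Lambda\xi\|^2_{L^2(\bbR;d\mu;{\cM}_\Sigma)}=\|T^{1/2}\xi\|^2_{\cK}$ yielding \eqref{D.8b} and \eqref{D.8c}, and an operator $S(B)$ for part $(iii)$) is the one the paper uses. However, there is a genuine gap in your construction of $\ul\Lambda$: you claim that for $\mu$-a.e.\ $\lambda$ the form $(\cdot,\cdot)_\lambda$ extends to a \emph{bounded} positive semidefinite sesquilinear form on all of $\cK$, so that a continuous quotient map $\pi_\lambda:\cK\to\cK_\lambda$ exists and you may set $(\ul\Lambda\xi)(\lambda):=\pi_\lambda(\xi)$ for every $\xi\in\cK$. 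This is false in general. Take $\cK=L^2((0,1);dx)$, $\Sigma(B)$ the operator of multiplication by $\chi_{B\cap(0,1)}$, and $\mu$ Lebesgue measure on $(0,1)$: then $(f,g)_\lambda=\ol{f(\lambda)}\,g(\lambda)$ is evaluation at the point $\lambda$, which is unbounded in the $\cK$-norm, so no continuous $\pi_\lambda$ exists. (Relatedly, the bound $|\rho_{m,n}(\lambda)|\le C_{m,n}$ you invoke also fails --- the Radon--Nikodym derivative of one finite measure with respect to another need not be bounded --- and even entrywise bounds would not give boundedness of the form in the $\cK$-norm.) The correct route, and the one the paper takes, is to define the semi-inner product only on the algebraic span $\cV={\rm lin.span}\{e_n\,|\,n\in\cI\}$, let $\cK_\lambda$ be the completion of $\cV/\cN_{\lambda}$, define $\dot{\ul\Lambda}$ on $\cV$ by the constant sections $\ul v(\lambda)=v$, verify $\|\dot{\ul\Lambda}v\|_{L^2(\bbR;d\mu;{\cM}_\Sigma)}=\|T^{1/2}v\|_{\cK}$, and then extend by continuity in the $L^2$-norm; for general $\xi\in\cK$ the section $(\ul\Lambda\xi)(\cdot)$ is thereby defined only up to $\mu$-null sets, as an $L^2$-limit, not pointwise through a quotient map on $\cK$.

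A secondary point concerning $(iii)$: asserting that $T_1^{-1/2}\Sigma_1(B)^{1/2}$ is a bounded contraction presupposes the range inclusion $\ran\big(\Sigma_1(B)^{1/2}\big)\subseteq\ran\big(T_1^{1/2}\big)$, which is precisely the nontrivial content here (cf.\ Lemma \ref{lD.8}); the paper obtains it by first showing that $\Sigma_1(B)^{1/2}T_1^{-1/2}$ is a densely defined contraction, taking its closure, and passing to the adjoint, which produces $T^{1/2}S(B)=\Sigma(B)^{1/2}$ and hence \eqref{D.10} via the computation you sketch. Your concluding ``pair with the generators'' step for \eqref{D.10} is fine once $S(B)\in\cB(\cK)$ is actually in hand, but as written you have assumed the key operator-theoretic fact rather than proved it.
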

\begin{proof}
Since the current version of this theorem extends the earlier one in \cite{GKMT01}, 
we now repeat it for the convenience of the reader. Moreover, we will shed additional 
light on the proof of \eqref{D.10}, correcting an oversight in this connection in 
\cite{GKMT01}. Introducing
\begin{equation}
\cV={\rm lin.span}\{e_n \in \cK \,|\, n\in\cI\}, \quad  \ol \cV = \cK, 
\end{equation} 
the Radon--Nikodym theorem implies that there
exist $\mu$-measurable $\phi_{m,n}$ such that
\begin{equation}
\int_B d\mu(\lambda)\phi_{m,n}(\lambda)=
(e_m,\Sigma(B)e_n)_\cK, \quad B \in \mathfrak{B} (\bbR), 
\; m, n \in \cI. \lb{D.11}
\end{equation}
Next, suppose $v=\sum_{n=1}^N \alpha_n e_n \in\cV$,
$\alpha_n\in\bbC$, $n=1,\dots,N$, $N\in\cI$. Then
\begin{equation}
0 \leq (v,\Sigma(B)v)_\cK=\int_B d\mu(\lambda)\sum_{m,n=1}^N
\phi_{m,n}(\lambda) \ol{\alpha_m} \alpha_n, \quad B \in \mathfrak{B} (\bbR). \lb{D.12}
\end{equation}
By considering only rational linear combinations (i.e., 
$\alpha_n \in \bbQ + i \, \bbQ$)  
we can deduce the existence of a set $E \in \mathfrak{B} (\bbR)$ with 
$\mu(\bbR\backslash E) =0$ such that for $\lambda \in E$,
\begin{equation}
\sum_{m,n} \phi_{m,n}(\lambda)\ol{\alpha_m}
\alpha_n\geq 0
\text{ for all finite sequences }
\{\alpha_n\}\subset\bbC. \lb{D.13}
\end{equation}
Hence we can define a semi-inner product $(\cdot,\cdot)_{\lambda}$ on $\cV$, 
\begin{equation}
(v,w)_{\lambda}=\sum_{m,n}\phi_{m,n}(\lambda)
\ol{\alpha_m} \beta_n, \quad \lambda \in E, 
\lb{D.14}
\end{equation}
for all $v=\sum_n \alpha_n e_n, \, w=\sum_n \beta_n e_n \in \cV$. \\
Next, let $\cK_\lambda$ be the completion of $\cV/\cN_{\lambda}$ with respect
to $\|\cdot\|_{\lambda}$, where 
\begin{equation} 
\cN_{\lambda}=\{\xi\in\cV\,|\,(\xi,\xi)_{\lambda}=0\}, \quad \lambda \in E, 
\end{equation} 
and define (for convenience) $\cK_{\lambda} = \cK$ for $\lambda \in \bbR\backslash E$. 
Consider $\cS(\{\cK_\lambda\}_{\lambda\in\bbR})$,  then each $v\in\cV$ defines an 
element 
$\ul v=\{\ul v(\lambda)\}_{\lambda\in\bbR} \in\cS(\{\cK_\lambda\}_{\lambda\in\bbR})$
by
\begin{equation}
\ul v (\lambda)=v \text{ for all } \lambda\in\bbR.
\lb{D.15}
\end{equation} Again we identify an element
$v\in \cV$ with an element in
$\cV/{\cN}_{\lambda}\subseteq
\cK_{\lambda}$. Applying Lemma \ref{lD.3}, the
collection
$\{\ul e_n\}_{n\in\cI}$ then
generates a measurable family of Hilbert spaces
${\cM}_\Sigma$. If $v = \sum_{n=1}^N \alpha_n e_n \in\cV$, $\alpha_n \in \bbC$, 
$n=1,\dots,N$, $N \in \cI$, then
\begin{align}
& \|\ul v\|^2_{L^2(\bbR; d\mu; {\cM}_\Sigma)}=\int_\bbR
d\mu(\lambda) \, (\ul v(\lambda),\ul v(\lambda))_\lambda
= \int_{\bbR} d\mu(\lambda) \, \sum_{m,n=1}^N \phi_{m,n}(\lambda) 
\ol{\alpha_m} \alpha_n    \no \\
& \quad = (v, \Sigma(\bbR) v)_{\cK} = (v,Tv)_\cK= \big\|T^{1/2}v\big\|^2_\cK. \lb{D.16}
\end{align}
Hence we can define
\begin{equation}
\dot {\ul \Lambda} : \begin{cases} \cV\to L^2(\bbR; d\mu; {\cM}_\Sigma), \\
v\mapsto\dot{\ul \Lambda} v=\ul v
=\{\ul v (\lambda)=v\}_{\lambda\in\bbR},
\end{cases}       \lb{D.17}
\end{equation}
and denote by $\ul \Lambda\in\cB\big(\cK,
L^2(\bbR; d\mu; {\cM}_\Sigma)\big)$, with 
\begin{equation}
\|\ul \Lambda\|_{\cB(\cK,L^2(\bbR; d\mu; {\cM}_\Sigma))}
= \big\|T^{1/2}\big\|_{\cB(\cK)}, 
\end{equation}
the closure of $\dot {\ul \Lambda}$.
In particular, one obtains
\begin{equation}
\|{\ul \Lambda} \xi\|_{L^2(\bbR; d\mu; \cM_{\Sigma})}^2
=\int_\bbR d\mu(\lambda) \, \|({\ul \Lambda} \xi)(\lambda)\|_{\cK_\lambda}^2
= \big\|T^{1/2} \xi \big\|^2_\cK, \quad \xi \in\cK,     \lb{D.17A}
\end{equation}
and hence 
\begin{equation}
\ker(\ul \Lambda) = \ker(T) = \cK_0.
\end{equation}

Then properties $(i)$ and $(ii)$ hold and we proceed to illustrating property
$(iii)$: Introduce the operator
\begin{equation}
\wti S(B) = \begin{pmatrix} I_{\cK_0} & 0 \\ 0
& \Sigma_1 (B)^{1/2} T_1^{-1/2} \end{pmatrix}, \quad
\dom\big(\wti S(B)\big) = \cK_0 \oplus \dom\big(T_1^{-1/2}\big),
\quad B \in \mathfrak{B} (\bbR),
\end{equation}
in $\cK = \cK_0 \oplus \cK_1$. Since
\begin{align}
\cK & = \ker(T) \oplus \ol{\ran(T)}   \no \\
& = \ker(T^{1/2}) \oplus \ol{\ran(T^{1/2})} \no \\
& = \ker(T^{1/2}) \oplus \ol{\ran(T_1^{1/2})}
\end{align}
$\dom\big(T_1^{-1/2}\big) = \ran\big(T_1^{1/2}\big)$ is dense in $\cK_1$.
(Alternatively, this follows from the fact that $\ker\big(T_1^{1/2}\big) = \{0\}$
and $0 \leq T_1^{1/2}$ is self-adjoint.) Thus $\wti S(B)$ is densely defined
in $\cK$. Applying \eqref{D.8A}, the computation
\begin{align}
 \big\|\Sigma_1(B)^{1/2} T_1^{-1/2} f\big\|_{\cK_1}
& = \big\|\Sigma(B)^{1/2} \big(0 \;\; T_1^{-1/2} f)\big)^\top \big\|_{\cK}  \no \\
& \leq \big\|T^{1/2} \big(0 \;\; T_1^{-1/2} f\big)^\top \big\|_{\cK}
= \bigg\|\begin{pmatrix} 0 & 0 \\ 0 & T_1^{1/2}\end{pmatrix}
\big(0 \;\; T_1^{-1/2} f\big)^\top\bigg\|_{\cK}  \no \\
& = \|f\|_{\cK_1}, \quad f \in \dom\big(T_1^{-1/2}\big),
\end{align}
then shows that $\Sigma_1 (B)^{1/2} T_1^{-1/2}$ has a bounded extension (its closure) to all of
$\cK_1$ with
\begin{equation}
\Big\|\ol{\Sigma_1 (B)^{1/2} T_1^{-1/2}}\Big\|_{\cB(\cK_1)} \leq 1,
\quad B \in \mathfrak{B} (\bbR).
\end{equation}
Hence, also $\wti S(B)$ has a bounded extension (its closure) to all of $\cK$ with
\begin{equation}
\Big\|\ol{\wti S(B)}\Big\|_{\cB(\cK)} \leq 1, \quad B \in \mathfrak{B} (\bbR).
\end{equation}
Moreover, one computes
\begin{align}
\wti S(B) T^{1/2} = \begin{pmatrix} I_{\cK_0} & 0 \\ 0 & \Sigma_1(B)^{1/2} T_1^{-1/2} \end{pmatrix}
\begin{pmatrix} 0 & 0 \\ 0 & T_1^{1/2} \end{pmatrix}
= \begin{pmatrix} 0 & 0 \\ 0 & \Sigma_1(B)^{1/2} \end{pmatrix} = \Sigma(B)^{1/2}.
\end{align}
In particular, this also yields
\begin{equation}
\ol{\wti S(B)} T^{1/2} = \wti S(B) T^{1/2} = \Sigma(B)^{1/2},
\quad B \in \mathfrak{B} (\bbR).   \lb{D.39a}
\end{equation}
Next, we introduce
\begin{equation}
S(B) = \big(\wti S(B)\big)^* = \Big(\ol{\wti S(B)} \, \Big)^* \in \cB(\cK),
\quad B \in \mathfrak{B} (\bbR),
\end{equation}
and note that
\begin{equation}
S(\bbR) = I_{\cK}.
\end{equation}
Then one obtains
\begin{equation}
S(B) = \begin{pmatrix} I_{\cK_0} & 0 \\ 0
& \Sigma_1 (B)^{1/2} T_1^{-1/2} \end{pmatrix}^* =
\begin{pmatrix} I_{\cK_0} & 0 \\ 0
& T_1^{-1/2 }\Sigma_1 (B)^{1/2} \end{pmatrix}, \quad B \in \mathfrak{B} (\bbR),
\end{equation}
since
\begin{equation}
\big[\Sigma_1 (B)^{1/2} T_1^{-1/2} \big]^* = T_1^{-1/2 }\Sigma_1 (B)^{1/2},
\quad B \in \mathfrak{B} (\bbR).
\end{equation}
as $\Sigma_1 (B)^{1/2} \in \cB(\cK_1)$ and $T_1^{-1/2}$ is densely defined
(in fact, self-adjoint)
in $\cK_1$ (cf.\ \cite[Theorem\ 4.19\,(b)]{We80}). By the fact that
$\ol{\wti S (B)} \in \cB(\cK)$ and hence $S (B) \in \cB(\cK)$, one concludes that
$T_1^{-1/2 }\Sigma_1 (B)^{1/2} \in \cB(\cK_1)$, that is,
$\ran\big(\Sigma_1 (B)^{1/2}\big) \subseteq \ran\big(T_1^{1/2}\big)$.

Thus, taking adjoints in \eqref{D.39a} one obtains
\begin{equation}
T^{1/2} S(B) = \Sigma(B)^{1/2},  \quad B \in \mathfrak{B} (\bbR).      \lb{D.43a}
\end{equation}

Let $ \xi\in\cK$, then combinng \eqref{D.9}, \eqref{D.9a}, and \eqref{D.43a} yields
\begin{align}
(\eta, \Sigma(B) \xi)_{\cK} &= (\Sigma(B)^{1/2} \eta, \Sigma(B)^{1/2} \xi)_{\cK}
\no \\
&= (T^{1/2} S(B) \eta, T^{1/2} S(B) \xi)_{\cK}   \no \\
&= (S(B) \eta, T S(B) \xi)_{\cK}     \no \\
&= \int_{\bbR} d \mu(\lambda) \,
(({\ul \Lambda} S(B) \eta)(\lambda), ({\ul \Lambda} S(B) \xi)(\lambda))_{\cK_\lambda}
\no \\
&= \int_B d \mu(\lambda) \,
(({\ul \Lambda} \eta)(\lambda), ({\ul \Lambda} \xi)(\lambda))_{\cK_\lambda}   \no \\
&= \int_{\bbR} d \mu(\lambda) \,
(\chi_B(\lambda) ({\ul \Lambda} \eta)(\lambda), \chi_B(\lambda) ({\ul \Lambda} \xi)(\lambda))_{\cK_\lambda},   \quad B \in \mathfrak{B} (\bbR),    \lb{D.17B}
\end{align}
implying \eqref{D.10}.
\end{proof}

Implicitly in the proof of Theorem \ref{tD.7} is a special case of the following result, which appears to be of independent interest. It may well be known, but since 
we could not quickly find it in the literature we include its short proof for the convenience 
of the reader:

\begin{lemma} \lb{lD.8}
Let $\cH$ be a complex, separable Hilbert space, $F, G$ self-adjoint operators in $\cH$,
and $0 \leq F \leq G$. Then
\begin{equation}
\ran (F^{\alpha}) \subseteq \ran(G^{\alpha}), \quad
\alpha \in (0,1/2].    \lb{D.17a}
\end{equation}
In particular, if in addition $F, G \in \cB(\cH)$ and $\ker(G) =\{0\}$, then
\begin{equation}
G^{-\alpha}F^{\alpha} \in \cB(\cH), \quad \alpha \in (0,1/2].   \lb{D.17AA}
\end{equation}
\end{lemma}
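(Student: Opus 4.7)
The approach combines the L\"owner--Heinz operator monotonicity inequality with a Douglas-style range inclusion argument.

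First, since $2\alpha \in (0,1]$ for $\alpha \in (0,1/2]$, the L\"owner--Heinz inequality asserts that $t \mapsto t^{2\alpha}$ is operator monotone on $[0,\infty)$, and hence the hypothesis $0 \leq F \leq G$ yields $F^{2\alpha} \leq G^{2\alpha}$. In the (possibly) unbounded setting this is to be read in the quadratic-form sense; in particular it gives the domain inclusion $\dom(G^\alpha) \subseteq \dom(F^\alpha)$ together with the pointwise norm bound
\[
\|F^\alpha \xi\|_\cH \leq \|G^\alpha \xi\|_\cH, \quad \xi \in \dom(G^\alpha).
\]

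Next I would mimic Douglas' classical construction. Define a linear map $D_0 \colon \ran(G^\alpha) \to \ran(F^\alpha)$ by $D_0(G^\alpha \xi) = F^\alpha \xi$ for $\xi \in \dom(G^\alpha)$; the preceding norm estimate makes $D_0$ well-defined (if $G^\alpha \xi = 0$, then $F^\alpha \xi = 0$) and contractive. Extending $D_0$ by continuity to $\overline{\ran(G^\alpha)}$ and by $0$ on $\ker(G^\alpha) = \overline{\ran(G^\alpha)}^\bot$ produces an operator $D \in \cB(\cH)$ with $\|D\|_{\cB(\cH)} \leq 1$ and $D G^\alpha \subseteq F^\alpha$. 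Taking adjoints in this last inclusion, using self-adjointness of $F^\alpha$ and $G^\alpha$ together with the standard identity $(BA)^* = A^* B^*$ valid for $B$ bounded and $A$ densely defined, yields
\[
F^\alpha = (F^\alpha)^* \subseteq (D G^\alpha)^* = G^\alpha D^*,
\]
so that for every $\xi \in \dom(F^\alpha)$ one has $D^* \xi \in \dom(G^\alpha)$ and $F^\alpha \xi = G^\alpha (D^* \xi) \in \ran(G^\alpha)$, proving \eqref{D.17a}.

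For the second assertion, when $F, G \in \cB(\cH)$ with $\ker(G) = \{0\}$, functional calculus gives $\ker(G^\alpha) = \{0\}$, so $G^{-\alpha}$ is a (generally unbounded) self-adjoint operator with $\dom(G^{-\alpha}) = \ran(G^\alpha)$ and $G^{-\alpha} G^\alpha = I_\cH$ on all of $\cH$. Combining this with the identity $F^\alpha = G^\alpha D^*$ derived above gives $G^{-\alpha} F^\alpha = D^* \in \cB(\cH)$, establishing \eqref{D.17AA}. I expect the only real technical subtlety to lie in the first step -- the correct interpretation of L\"owner--Heinz in the unbounded regime as a form inequality producing the domain inclusion and norm bound displayed above. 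Once those are in hand, the remainder is a direct Douglas-type argument that runs uniformly in both the bounded and unbounded cases.
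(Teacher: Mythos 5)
Your proof is correct, and it rests on the same key external input as the paper's --- the Heinz--Kato (L\"owner--Heinz) inequality yielding $\dom(G^\alpha)\subseteq\dom(F^\alpha)$ and $\|F^\alpha\xi\|_\cH\le\|G^\alpha\xi\|_\cH$ for $\xi\in\dom(G^\alpha)$ --- but the way you convert that norm bound into the range inclusion is genuinely different. The paper first splits off $\ker(G)$, reduces to an injective part $G_1$, and then shows $F_1^\alpha y\in\ran(G_1^\alpha)$ by a duality estimate: it bounds $|(F_1^\alpha y,G_1^{-\alpha}x)_{\cH_1}|\le\|y\|_{\cH_1}\,\|x\|_{\cH_1}$ and concludes $F_1^\alpha y\in\dom\big((G_1^{-\alpha})^*\big)=\ran(G_1^\alpha)$, with boundedness of $G^{-\alpha}F^\alpha$ then obtained from the closed graph theorem. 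You instead run a Douglas factorization: construct the contraction $D$ with $DG^\alpha\subseteq F^\alpha$ and take adjoints to get $F^\alpha\subseteq G^\alpha D^*$, which hands you the range inclusion without any preliminary kernel decomposition (injectivity of $G$ is never needed for \eqref{D.17a}) and yields $G^{-\alpha}F^\alpha=D^*$ together with the explicit estimate $\|G^{-\alpha}F^\alpha\|_{\cB(\cH)}\le 1$ rather than mere boundedness. The two routes are of comparable length; yours is slightly cleaner in that it absorbs $\ker(G)$ automatically and is quantitative, while the paper's computation is marginally more elementary in that it only pairs against vectors in $\ran(G_1^\alpha)$ and never invokes the adjoint product rule $(DG^\alpha)^*=(G^\alpha)^*D^*$ --- though that rule is legitimately available here since $D$ is bounded and $G^\alpha$ is densely defined, as you note.
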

\begin{proof}
The hypothesis
$0 \leq F \leq G$ implies
\begin{equation}
\|F^{1/2} x\|_{\cH} \leq \|G^{1/2} x\|_{\cH}, \quad x \in \dom(G^{1/2})
\subseteq \dom(F^{1/2}),     \lb{D.17b}
\end{equation}
and hence one concludes as before in the context of bounded operators 
(cf.\ \eqref{D.8B}) that
\begin{equation}
\ker(G) \subseteq \ker(F).
\end{equation}
Thus, in analogy to \eqref{D.8C}, \eqref{D.8D}, and \eqref{D.8H}, we again decompose
\begin{equation}
\cH = \cH_0 \oplus \cH_1, \quad \cH_0 = \ker(G), \; \,
\cH_1 = \ker(G)^\bot = \ol{\ran(G)},    \lb{D.17C}
\end{equation}
and hence obtain the $2\times 2$ block operator representations
\begin{equation}
F = \begin{pmatrix} 0 & 0 \\ 0 & F_1 \end{pmatrix},   \quad
G = \begin{pmatrix} 0 & 0 \\ 0 & G_1 \end{pmatrix}, \quad
\ker(G_1) = \{0\},   \lb{D.17D}
\end{equation}
with respect to the decomposition \eqref{D.17C}, with self-adjoint operators
$F_1, G_1$ in $\cH_1$ satisfying $0 \leq F_1 \leq G_1$. In particular,
\begin{equation}
\|F_1^{1/2} x\|_{\cH_1} \leq \|G_1^{1/2} x\|_{\cH_1}, \quad x \in \dom(G_1^{1/2})
\subseteq \dom(F_1^{1/2}),    \lb{D.17E}
\end{equation}
and Heinz's inequality (cf.\ \cite[Satz\ 3]{He51}, \cite[Theorem\ 2]{Ka52})
\eqref{D.17E} implies that
\begin{equation}
\|F_1^\al x\|_{\cH} \leq \|G_1^\al x\|_{\cH},
\quad x \in \dom(G_1^{\alpha}) \subseteq \dom(F_1^{\alpha}), \; \al\in(0,1/2].    \lb{D.17c}
\end{equation}
Then for any $y\in\dom(F_1^{\alpha})$ and
$x\in\ran(G_1^\alpha)= \dom(G_1^{-\alpha})$, one computes
using self-adjointness of $F_1^\alpha$ and \eqref{D.17c}
\begin{align}
|(F_1^\alpha y, G_1^{-\alpha} x)_{\cH_1}|
&= |(y, F_1^\al G_1^{-\al} x)_{\cH_1}| \leq \|y\|_{\cH_1} \, \|F_1^\alpha
G_1^{-\alpha} x\|_{\cH_1}   \no \\
& \leq \|y\|_{\cH_1} \, \|G_1^\alpha G_1^{-\alpha} x\|_{\cH_1}
= \|y\|_{\cH_1} \, \|x\|_{\cH_1}.
\end{align}
This implies $F_1^\alpha y \in \dom((G_1^{-\alpha})^*) = \dom(G_1^{-\alpha})
= \ran(G_1^\al) = \ran(G^\al)$ and hence \eqref{D.17a} since also
$\ran(F_1^\al) = \ran(F^\al)$.
The fact \eqref{D.17AA} then follows from the closed graph theorem.
\end{proof}

Next, we recall that the construction in Theorem \ref{tD.7} is essentially unique:

\begin{theorem} [\cite{GKMT01}] \lb{tD.9}
Suppose $\cK'_{\lambda},\ \lambda\in\bbR$ is a
family of separable
complex Hilbert spaces, $\cM'$ is a measurable
family of Hilbert spaces
modelled on $\mu$ and $\{\cK'_{\lambda}\}$, and
$\ul \Lambda'\in\cB\big(\cK,L^2(\bbR; d\mu; \cM')\big)$ is
a map satisfying $(i)$, $(ii)$, and $(iii)$ of
Theorem \ref{tD.7}.
Then for $\mu$-a.e. $\lambda\in\bbR$ there is a
unitary operator
$U_{\lambda}:\cK_{\lambda}\to\cK'_{\lambda}$
such that $f=\{f(\lambda)\}_{\lambda\in\bbR}
\in {\cM}_{\Sigma}$ if and only if
$\{U_{\lambda}f(\lambda)\}_{\lambda \in \bbR}\in\cM'$ and for all
$\xi\in\cK$,
\begin{equation}
(\ul \Lambda' \xi)(\lambda)= U_{\lambda}
(\ul \Lambda \xi)(\lambda) \quad \text{$\mu$-a.e.}
\lb{D.17g}
\end{equation}
\end{theorem}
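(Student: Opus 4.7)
The plan is to build unitaries $U_\lambda:\cK_\lambda\to\cK'_\lambda$ fiberwise from the matching of Gram structures forced by property $(i)$, and then promote the fiberwise identification to the global statements about ${\cM}_\Sigma$ versus $\cM'$ and to the intertwining \eqref{D.17g}. Fix a countable, linearly independent total sequence $\{e_n\}_{n\in\cI}$ in $\cK$. Applying property $(i)$ of Theorem \ref{tD.7} to both $\ul\Lambda$ and $\ul\Lambda'$ gives
\begin{equation*}
\int_B d\mu(\lambda)\,((\ul\Lambda e_m)(\lambda),(\ul\Lambda e_n)(\lambda))_{\cK_\lambda}=(e_m,\Sigma(B)e_n)_\cK=\int_B d\mu(\lambda)\,((\ul\Lambda' e_m)(\lambda),(\ul\Lambda' e_n)(\lambda))_{\cK'_\lambda}
\end{equation*}
for every $B\in\mathfrak{B}(\bbR)$ and all $m,n\in\cI$. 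Both integrands are $\mu$-measurable by Definition \ref{dD.1}, so the equality of integrals over every Borel set forces $\mu$-a.e.\ equality of the integrands; taking the union of the countably many exceptional null sets yields a single $\mu$-null set $N$ outside of which the full Gram matrices of $\{(\ul\Lambda e_n)(\lambda)\}$ and $\{(\ul\Lambda' e_n)(\lambda)\}$ coincide.

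For $\lambda\notin N$ I define $U_\lambda^0$ on $\mathrm{lin.span}\{(\ul\Lambda e_n)(\lambda)\}$ by $(\ul\Lambda e_n)(\lambda)\mapsto(\ul\Lambda' e_n)(\lambda)$; agreement of the Gram matrices makes $U_\lambda^0$ simultaneously well defined (same kernel on finite linear combinations) and isometric. To extend $U_\lambda^0$ to a unitary $U_\lambda:\cK_\lambda\to\cK'_\lambda$ I need $\overline{\mathrm{lin.span}\{(\ul\Lambda e_n)(\lambda)\}}=\cK_\lambda$ and the analogous statement for $\cK'_\lambda$ for $\mu$-a.e.\ $\lambda$. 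On the unprimed side this is built into the construction of Theorem \ref{tD.7} (condition $(\beta)$ of Lemma \ref{lD.3} at the specific $\cK_\lambda$ manufactured there); on the primed side it has to be deduced from generation property $(ii)$ combined with the self-duality built into Definition \ref{dD.1}, via a measurable selection argument ruling out any fiberwise orthogonal complement in $\cK'_\lambda$. After absorbing this additional null set into $N$ and defining $U_\lambda$ arbitrarily on $N$, each $U_\lambda$ is unitary.

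With the $U_\lambda$'s in place, the equivalence ``$f\in{\cM}_\Sigma$ iff $\{U_\lambda f(\lambda)\}\in\cM'$'' follows from Definition \ref{dD.1} tested against a generating family: for $f\in{\cM}_\Sigma$,
\begin{equation*}
((\ul\Lambda' e_n)(\lambda),U_\lambda f(\lambda))_{\cK'_\lambda}=(U_\lambda(\ul\Lambda e_n)(\lambda),U_\lambda f(\lambda))_{\cK'_\lambda}=((\ul\Lambda e_n)(\lambda),f(\lambda))_{\cK_\lambda}
\end{equation*}
is $\mu$-measurable because $f,\ul\Lambda e_n\in{\cM}_\Sigma$, and property $(ii)$ guarantees that $\{\ul\Lambda' e_n\}$ generates $\cM'$; the reverse direction repeats the calculation with $U_\lambda^{-1}$. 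The intertwining \eqref{D.17g} holds for $\xi=e_n$ by construction of $U_\lambda^0$, extends by linearity to $\mathrm{lin.span}\{e_n\}_{n\in\cI}$, and propagates to every $\xi\in\cK$ by continuity: both $\xi\mapsto\{U_\lambda(\ul\Lambda\xi)(\lambda)\}$ and $\xi\mapsto\ul\Lambda'\xi$ are bounded linear maps $\cK\to L^2(\bbR;d\mu;\cM')$ (the former because the $U_\lambda$'s assemble into an isometry of the two $L^2$-spaces), they coincide on the dense subspace $\mathrm{lin.span}\{e_n\}$, hence on all of $\cK$, giving \eqref{D.17g} $\mu$-a.e.

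I expect the main obstacle to be the density-of-span step in the second paragraph: the primed data are assumed only to satisfy $(i)$--$(iii)$, so the identity $\overline{\mathrm{lin.span}\{(\ul\Lambda' e_n)(\lambda)\}}=\cK'_\lambda$ must be forced by generation $(ii)$ rather than by an explicit construction of the fibers. The standard route—choosing a measurable section of the putative orthogonal complement and showing it yields an element of $\cM'$ that cannot be approximated by indicator combinations of $\{\ul\Lambda' e_n\}$, contradicting $(ii)$—is routine in spirit but must be set up carefully inside the self-dual measurable-family framework of Definition \ref{dD.1} in order to produce a genuine member of $\cM'$.
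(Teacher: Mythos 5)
Your argument is correct and follows essentially the standard route (the paper states Theorem \ref{tD.9} without proof, citing \cite{GKMT01}, where precisely this fiberwise Gram-matrix construction of $U_\lambda$ from property $(i)$ applied to both $\ul \Lambda$ and $\ul \Lambda'$ is carried out). The one step you flag as delicate --- fiberwise totality of $\{(\ul \Lambda' e_n)(\lambda)\}$ in $\cK'_\lambda$ for $\mu$-a.e.\ $\lambda$ --- is in fact easier than you anticipate: no measurable selection is needed, since any (a priori non-measurable) unit-norm section $h$ of the putative orthogonal complement satisfies $(h(\lambda),g(\lambda))_{\cK'_\lambda}=0$ for $\mu$-a.e.\ $\lambda$ and every $g\in\cM'$ (as generation confines the fibers of $\cM'$ to $\ol{{\rm lin.span}\{(\ul \Lambda' e_n)(\lambda)\}}$), hence $h\in\cM'$ by the maximality clause of Definition \ref{dD.1}, and $h$ then visibly cannot be approximated $\mu$-a.e.\ by elements of ${\rm lin.span}\{\chi_B \, \ul \Lambda' e_n\}$, contradicting property $(ii)$.
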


\begin{remark} \lb{rD.11}
$(i)$ Without going into further details, we note that
${\cM}_{\Sigma}$ depends of course on the control measure $\mu$.
However, a change in $\mu$ merely effects a change in
density and so ${\cM}_{\Sigma}$ can essentially be
viewed as $\mu$-independent. \\
$(ii)$ With $0 < w$ a $\mu$-measurable weight function, one can also consider the
Hilbert space $L^2(\bbR; w d\mu; {\cM}_\Sigma)$.
In view of our comment in item $(i)$ concerning the mild
dependence on the control measure $\mu$ of
${\cM}_{\Sigma}$, one typically puts more emphasis on the
operator-valued measure $\Sigma$ and hence uses the more suggestive 
notation $L^2(\bbR; w d\Sigma; \cK)$ instead of the more
precise $L^2(\bbR; w d\mu; {\cM}_\Sigma)$ in this case.
\end{remark}

Next, let
\begin{equation} 
\cV={\rm lin.span}\{e_n \in \cK \,|\, n\in\cI\}, \quad  \ol \cV = \cK, 
\lb{D.17h}
\end{equation}
and define
\begin{equation}
{\ul \cV}_\Sigma={\rm lin.span}\big\{\chi_B \, \ul e_n\in
L^2(\bbR; d\mu; {\cM}_{\Sigma}) \, \big| \, B \in \mathfrak{B}(\bbR),
\,n\in\cI\big\}. \lb{D.18}
\end{equation}
The fact that $\{\ul e_n\}_{n\in\cI}$ generates
${\cM}_{\Sigma}$ then implies that ${\ul \cV}_\Sigma$
is dense in the Hilbert space $L^2(\bbR; d\mu; {\cM}_\Sigma)$, that is,
\begin{equation}
\ol{{\ul \cV}_{\Sigma}}=L^2(\bbR; d\mu; {\cM}_\Sigma). \lb{D.19}
\end{equation}

Since the operator-valued distribution function $\Sigma(\cdot)$ has at most
countably many discontinuities on $\bbR$, denoting by $\mathfrak{S}_{\Sigma}$ the
corresponding set of discontinuities of $\Sigma(\cdot)$, introducing the set of intervals
\begin{equation}
\cB_{\Sigma} = \{(\alpha, \beta] \subset \bbR\,|\, \alpha, \beta \in \bbR\backslash\mathfrak{S}_{\Sigma}\},
\end{equation}
the minimal $\sigma$-algebra generated by $\cB_{\Sigma}$ coincides with the Borel algebra
$\mathfrak{B}(\bbR)$. Hence one can introduce
\begin{equation}
{\wti {\ul \cV}}_\Sigma = {\rm lin.span}\big\{\chi_{(\alpha,\beta]} \, \ul e_n\in
L^2(\bbR; d\mu; {\cM}_{\Sigma}) \, \big| \,
\alpha, \beta \in \bbR\backslash\mathfrak{S}_{\Sigma}, \,n\in\cI\big\},    \lb{D.19a}
\end{equation}
which still retains the density property in \eqref{D.19}, that is,
\begin{equation}
\ol{{\wti{\ul \cV}}_{\Sigma}}=L^2(\bbR; d\mu; {\cM}_\Sigma). \lb{D.19b}
\end{equation}

In the following we briefly describe an alternative construction of $L^2(\bbR; d\Sigma; \cK)$ used by Berezanskii \cite[Sect.\ VII.2.3]{Be68} in order to identify the two constructions.

Introduce
\begin{align}
&C_{0,0}(\bbR;\cK) = \bigg\{\ul u:\bbR\to\cK\,\bigg|\, \ul u (\cdot) \text{ is strongly continuous in $\cK$}, \, \supp(u) \text{ is compact},   \no \\
& \hspace*{4.1cm}
\bigcup_{\lambda\in\bbR} \ran (\ul u(\lambda)) \subseteq \cK_{\ul u}, 
\, \dim(\cK_{\ul u}) < \infty\bigg\}
\end{align}
On $C_{0,0}(\bbR;\cK)$ one can introduce the semi-inner product
\begin{equation}
(\ul u,\ul v)_{L^2(\bbR; d\Sigma; \cK)}
= \int_{\bbR} d (\ul u(\lambda), \Sigma(\lambda) \ul v(\lambda))_{\cK},
\quad \ul u, \ul v \in C_{0,0}(\bbR;\cK),    \lb{D.35}
\end{equation}
where the integral on the right-hand side of \eqref{D.35} is well-defined in the 
Riemann--Stieltjes sense.
Introducing the kernel of this semi-inner product by
\begin{equation}
\cN = \{\ul u \in C_{0,0}(\bbR;\cK) \,|\, (\ul u, \ul u)_{L^2(\bbR; d\Sigma; \cK)} = 0\},
\lb{D.36}
\end{equation}
Berezanskii \cite[Sect.\ VII.2.3]{Be68} obtains the separable Hilbert space 
$L^2(\bbR; d\Sigma; \cK)$ as the completion of $C_{0,0}(\bbR;\cK)/\cN$ with respect 
to the inner product in \eqref{D.35} as
\begin{equation}
\hatt {L^2(\bbR; d\Sigma; \cK)} = \ol{C_{0,0}(\bbR;\cK)/\cN}.      \lb{D.37}
\end{equation}
In particular,
\begin{equation}
([\ul u], [\ul v])_{\hatt{L^2(\bbR; d\Sigma; \cK)}}
= \int_{\bbR} d (\ul u(\lambda), \Sigma(\lambda) \ul v(\lambda))_{\cK},
\quad \ul u, \ul v \in C_{0,0}(\bbR;\cK),      \lb{D.38}
\end{equation}
and (cf.\ also \cite[Corollary\ 2.6]{MM04}) \eqref{D.38} extends to piecewise continuous $\cK$-valued functions with compact support as long as the discontinuities of $\ul u$ and $\ul v$ are disjoint from the set $\mathfrak{S}_{\Sigma}$ (the set of discontinuities of $\Sigma(\cdot)$).

Since Kats' work in the case of a finite-dimensional Hilbert space $\cK$ (cf.\ \cite{Ka50}, \cite{Ka03} and also Fuhrman \cite[Sect.\ II.6]{Fu81} and Rosenberg \cite{Ro64}),
and especially in the work of Malamud and Malamud \cite{MM04}, who studied the general case
$\dim(\cK) \leq \infty$, it has become customary to interchange the order of taking the quotient with
respect to the semi-inner product and completion in this process of constructing
$\hatt{L^2(\bbR; d\Sigma; \cK)}$. More precisely, in this context one first completes $C_{0,0}(\bbR,\cK)$
with respect to the semi-inner product \eqref{D.35} to obtain a semi-Hilbert space
\begin{equation}
\wti {L^2(\bbR; d\Sigma; \cK)} = \ol{C_{0,0}(\bbR;\cK)},     \lb{D.39}
\end{equation}
and then takes the quotient with respect to the kernel of the underlying semi-inner product, as
described in method $\mathbf{(I)}$ of Appendix \ref{sA}. Berezanskii's approach in \cite[Sect.\ VII.2.3]{Be68}
corresponds to method $\mathbf{(II)}$ discussed in Appendix \ref{sA}. The equivalence of these two methods is not stated in these sources and hence we spelled this out explicitly in Lemma \ref{lE.1} in  Appendix \ref{sA}.

Next we will indicate that Berezanskii's construction of $\hatt{L^2(\bbR; d\Sigma; \cK)}$ 
(and hence the corresponding construction by Kats (if $\dim(\cK)<\infty$) and by 
Malamud and Malamud (if $\dim(\cK) \leq \infty$) is equivalent to the one in \cite{GKMT01} 
and hence to that outlined in
Theorem \ref{tD.7}:

\begin{theorem} \lb{tD.14}
The spaces $L^2(\bbR; d\Sigma; \cK)$ and $L^2(\bbR; d\mu; \cM_{\Sigma})$ are 
isometrically isomorphic.  
\end{theorem}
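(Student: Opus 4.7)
The plan is to construct an explicit isometric bijection $\Phi$ between the two spaces by first identifying dense subspaces of simple step-functions on each side, defining $\Phi$ naturally between these subspaces, and then extending by continuity. On the $L^2(\bbR;d\mu;\cM_\Sigma)$ side, the subspace $\wti{\ul\cV}_\Sigma$ defined in \eqref{D.19a} is already known to be dense by \eqref{D.19b}. On the Berezanskii side, one works with the analogous span
\[
\cV_\Sigma^B := \mathrm{lin.span}\big\{\chi_{(\alpha,\beta]}\, \xi \,\big|\, \alpha,\beta \in \bbR\setminus\mathfrak{S}_\Sigma,\ \xi \in \cK\big\},
\]
regarded as a subspace of $\hatt{L^2(\bbR;d\Sigma;\cK)}$ via the extension of \eqref{D.38} to piecewise continuous functions with discontinuities disjoint from $\mathfrak{S}_\Sigma$ noted after \eqref{D.38}.

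Define $\Phi:\cV_\Sigma^B \to L^2(\bbR;d\mu;\cM_\Sigma)$ by $\Phi\big(\chi_{(\alpha,\beta]}\xi\big) = \chi_{(\alpha,\beta]}\cdot \ul\Lambda\xi$ and extend linearly. To verify isometry, pick $\ul u, \ul v \in \cV_\Sigma^B$ and refine to a common partition $a_0 < a_1 < \cdots < a_N$ with $a_j \in \bbR\setminus\mathfrak{S}_\Sigma$, writing $\ul u = \sum_j \chi_{(a_j,a_{j+1}]}\xi_j$ and $\ul v = \sum_j \chi_{(a_j,a_{j+1}]}\eta_j$. Evaluating the Riemann--Stieltjes integral in \eqref{D.38} on piecewise constant step functions yields
\[
(\ul u,\ul v)_{\hatt{L^2}} = \sum_j \big(\xi_j,\,\Sigma\big((a_j,a_{j+1}]\big)\eta_j\big)_\cK,
\]
whereas, using the disjointness of the intervals together with \eqref{D.9} applied to each $B=(a_j,a_{j+1}]$, one obtains
\[
(\Phi\ul u,\Phi\ul v)_{L^2(d\mu;\cM_\Sigma)} = \sum_j \int_{(a_j,a_{j+1}]} d\mu(\lambda)\,\big((\ul\Lambda\xi_j)(\lambda),(\ul\Lambda\eta_j)(\lambda)\big)_{\cK_\lambda} = \sum_j \big(\xi_j,\,\Sigma\big((a_j,a_{j+1}]\big)\eta_j\big)_\cK,
\]
matching. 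Thus $\Phi$ respects the underlying equivalence relations and is isometric on $\cV_\Sigma^B$.

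The main obstacle is establishing density of $\cV_\Sigma^B$ in $\hatt{L^2(\bbR;d\Sigma;\cK)}$. Given $\ul u \in C_{0,0}(\bbR;\cK)$ with range in a finite-dimensional subspace $\cK_{\ul u}\subset\cK$, pick a partition $\{a_j\}_{j=0}^N$ of a compact interval containing $\supp(\ul u)$ with all $a_j \in \bbR\setminus\mathfrak{S}_\Sigma$ (possible since $\mathfrak{S}_\Sigma$ is at most countable), and set $\ul u_\pi := \sum_j \chi_{(a_j,a_{j+1}]}\ul u(a_j) \in \cV_\Sigma^B$. Expanding $\delta := \ul u - \ul u_\pi$ in an orthonormal basis $\{f_i\}_{i=1}^d$ of $\cK_{\ul u}$ as $\delta(\lambda) = \sum_{i=1}^d c_i(\lambda) f_i$, and noting that each complex scalar measure $\sigma_{ij}(\cdot) := (f_i,\Sigma(\cdot)f_j)_\cK$ has total variation bounded by $\|T\|_{\cB(\cK)}$ (by Cauchy--Schwarz on the partitions defining the total variation together with $\Sigma(B) \leq T$ and countable additivity), one obtains
\[
\|\delta\|_{\hatt{L^2}}^2 \leq d^2\,\|T\|_{\cB(\cK)} \sup_\lambda \|\delta(\lambda)\|_\cK^2,
\]
and the right-hand side tends to $0$ as $|\pi|\to 0$ by uniform continuity of $\ul u$ on its compact support. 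Combined with density of $C_{0,0}(\bbR;\cK)/\cN$ in $\hatt{L^2}$ inherent in the construction \eqref{D.37}, this yields density of $\cV_\Sigma^B$. Hence $\Phi$ extends by continuity to an isometry $\bar\Phi : \hatt{L^2(\bbR;d\Sigma;\cK)} \to L^2(\bbR;d\mu;\cM_\Sigma)$; taking $\xi = e_n$ in the definition of $\Phi$ shows $\wti{\ul\cV}_\Sigma \subseteq \ran(\bar\Phi)$, so by \eqref{D.19b} the range of $\bar\Phi$ is dense, and an isometry with dense range into a Hilbert space is surjective, yielding the claimed isometric isomorphism.
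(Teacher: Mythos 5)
Your proposal is correct and follows essentially the same route as the paper's proof: both identify the dense subspaces of step functions with endpoints off $\mathfrak{S}_{\Sigma}$ on each side (cf.\ \eqref{D.19a}--\eqref{D.19b}), verify the isometry on these generators via \eqref{D.9} exactly as in \eqref{D.40}, and extend by continuity to a unitary map. The only substantive difference is that where the paper cites the proof of Theorem\ 2.14 in \cite{MM04} for the density of the step functions in $L^2(\bbR; d\Sigma; \cK)$, you supply a self-contained argument (uniform approximation of continuous, compactly supported, finite-range functions by step functions, controlled by the total-variation bound $\mathrm{Var}\,(f_i,\Sigma(\cdot)f_j)_{\cK} \leq \big\|T^{1/2}f_i\big\|_{\cK}\big\|T^{1/2}f_j\big\|_{\cK}$), which is a correct and welcome addition.
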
 
\begin{proof} 
We first recall that the set ${\wti {\ul \cV}}_\Sigma$ in \eqref{D.19a}
is dense in $L^2(\bbR; d\mu; {\cM}_\Sigma)$. On the other hand, it was shown in the 
proof of Theorem\ 2.14 in \cite{MM04} that ${\wti {\ul \cV}}_\Sigma$ is also dense in
$L^2(\bbR; d\Sigma; \cK)$. The fact
\begin{align}
\|\chi_{(\alpha,\beta]} \, \ul{e_n}\|_{L^2(\bbR; d\Sigma; \cK)}^2
& = \int_{\bbR} d(\chi_{(\alpha,\beta]}(\lambda) \, e_n,
\Sigma(\lambda) \chi_{(\alpha,\beta]}(\lambda) \, e_n)_{\cK}   \no \\
& = \int_{(\alpha,\beta]} d(e_n, \Sigma(\lambda) e_n)_{\cK}   \no \\
& = (e_n, \Sigma((\alpha, \beta]) \, e_n)_{\cK}    \no \\
& = \int_{\bbR} d\mu(\lambda) \,
\|(({\ul \Lambda} (\chi_{(\alpha,\beta]}(\lambda) e_n)(\lambda)\|_{\cK_{\lambda}}^2     \no \\
& = \|\chi_{(\alpha,\beta]} \, \ul{e_n}\|_{L^2(\bbR; d\mu; \cM_{\Sigma})}^2,  \quad
\alpha, \beta \in \bbR\backslash \mathfrak{S}_{\Sigma}, \; n \in \cI,   \lb{D.40}
\end{align}
then establishes a densely defined isometry between the Hilbert spaces
$L^2(\bbR; d\Sigma; \cK)$ and $L^2(\bbR; d\mu; \cM_{\Sigma})$ which extends by 
continuity to a unitary map.  
\end{proof}

As a result, dropping the additional ``hat'' on the left-hand side of \eqref{D.37}, and hence 
just using the notation $L^2(\bbR; d\Sigma; \cK)$ for both Hilbert space constructions is consistent.

We continue this section by yet another approach originally due to Gel'fand and
Kostyuchenko \cite{GK55} and Berezanskii \cite[Ch.\ V]{Be68}. In this context
we also refer to Berezankii \cite[Sect.\ 2.2]{Be86},
Berezansky, Sheftel, and Us \cite[Ch.\ 15]{BSU96},
Birman and Entina \cite{BE67}, Gel'fand and Shilov \cite[Ch.\ IV]{GS67},
and M.\ Malamud and S.\ Malamud \cite{MM02}, \cite{MM04}: Introducing an
operator $K \in \cB_2(\cH)$ with $\ker(K) = \ker(K^*) = \{0\}$, one has the existence
of the weakly $\mu$-measurable nonnegative operator-valued function
$\Psi_K(\cdot)$ with values in $\cB_1(\cH)$, such that
\begin{align}
\begin{split}
(f, \Sigma (B) g)_{\cH} = \int_B d\mu(t) \,
\big(\Psi_K (t)^{1/2} K^{-1} f, \Psi_K (t)^{1/2} K^{-1} g\big)_{\cH},&   \\
f, g \in \dom\big(K^{-1}\big), \; B \in \mathfrak{B} (\bbR), \; \text{$B$ bounded,}&
\end{split}
\end{align}
with
\begin{equation}
\Psi_K (\cdot) = \f{d K^*\Sigma K}{d \mu}(\cdot) \, \text{ $\mu$-a.e.}
\end{equation}
In fact, the derivative $\Psi_K (\cdot)$ exists in the $\cB_1(\cH)$-norm (cf.\ \cite{BE67}
and \cite{MM02}, \cite{MM04}). Introducing the semi-Hilbert space
$\wti \cH_t$, $t\in\bbR$, as the completion of $\dom\big(K^{-1}\big)$ with respect to
the semi-inner product
\begin{equation}
(f,g)_{\wti \cH_t} = \big(\Psi_K (t)^{1/2} K^{-1} f, \Psi_K (t)^{1/2} K^{-1} g\big)_{\cH},
\quad f, g \in \dom\big(K^{-1}\big), \; t \in \bbR,
\end{equation}
factoring $\wti \cH_t$ by the kernel of the corresponding semi-norm
$\ker(\|\cdot\|_{\wti \cH_t})$ then yields the Hilbert space
$\cH_t = \wti \cH_t / \ker(\|\cdot\|_{\wti \cH_t})$, $t\in\bbR$. One can show
(cf.\ \cite{MM02}, \cite{MM04}) that
\begin{equation}
L^2(\bbR; d\Sigma; \cK) \, \text{ and } \, \int_{\bbR}^{\oplus} d \mu(t) \, \cH_t
\, \text{ are isometrically isomorphic,}
\end{equation}
yielding yet another construction of $L^2(\bbR; d\Sigma; \cK)$.

\smallskip

We conclude this section by sketching some applications to the perturbation theory of 
self-adjoint operators and to the theory of self-adjoint extensions of symmetric operators, 
following \cite{GKMT01}. We will also briefly comment on work in preparation concerning 
the spectral theory of ordinary differential operators with operator-valued coefficients.  

\medskip

\noindent 
$\mathbf{(I)}$ {\bf Self-adjoint perturbations of self-adjoint operators.} 

We start by recalling the following result:

\begin{lemma} [\cite{GKMT01}] \lb{lD.15} 
Suppose $\cK$, $\cH$ are separable complex Hilbert spaces, $K\in\cB(\cK,\cH)$, 
$\{E(B)\}_{B\in\mathfrak{B}(\bbR)}$ is a family of orthogonal projections in $\cH$, 
and assume that 
\begin{equation}
\ol{{\rm lin.span}\{E(B)Ke_n\in\cH
\,|\, B\in\mathfrak{B}(\bbR),\, n\in\cI\}}=\cH, 
\end{equation}
with $\{e_n\}_{n\in\cI}$,  $\cI\subset\bbN$ a complete orthonormal system in $\cK$.  Define
\begin{equation}
\Sigma:\Sigma\to\cB(\cK), \quad \Sigma(B)=K^*E(B)K, 
\end{equation}
and introduce
\begin{align}
&\dot U:{\ul {\cV}}_\Sigma\to\cH, \no \\
&{\ul \cV}_\Sigma\ni \sum_{m=1}^M \sum_{n=1}^N 
\alpha_{m,n}\chi_{B_m}\ul e_n 
\mapsto \dot U\bigg(\sum_{m=1}^M \sum_{n=1}^N \alpha_{m,n} 
\chi_{B_m}\ul e_n\bigg)    \\
&\hspace*{4.35cm}=\sum_{m=1}^M \sum_{n=1}^N \alpha_{m,n} 
E(B_m)Ke_n\in\cH, \no \\
&\hspace*{1.75cm} \alpha_{m,n}\in\bbC, \, m=1,\dots,M, 
\, n=1,\dots,N, \, M,N\in\cI. \no
\end{align}
Then $\dot U$ extends to a unitary operator $U:L^2(\bbR;d\mu; {\cM}_\Sigma)\to\cH$.
\end{lemma}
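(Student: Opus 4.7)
The plan is to verify three things in order: that $\Sigma(B)=K^*E(B)K$ really is a bounded nonnegative operator-valued measure (so that Theorem \ref{tD.7} applies and $L^2(\bbR;d\mu;\cM_\Sigma)$ exists), that $\dot U$ is a well-defined isometry on the dense set ${\ul \cV}_\Sigma\subset L^2(\bbR;d\mu;\cM_\Sigma)$, and finally that the closure of its range is all of $\cH$. The first step is routine: projection-valuedness and strong countable additivity of $\{E(B)\}$, together with boundedness of $K$, immediately give properties $(i)$ and $(ii)$ of Definition \ref{dA.6} for $\Sigma$, with control measure $\mu$ chosen as in \eqref{D.8a}.

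The heart of the argument is the isometry identity. For a representative $\ul f=\sum_{m=1}^M\sum_{n=1}^N\alpha_{m,n}\chi_{B_m}\ul e_n$ and similarly $\ul g=\sum_{k,l}\beta_{k,l}\chi_{B_k}\ul e_l$, I would compute
\begin{align*}
(\dot U\ul f,\dot U\ul g)_\cH
&=\sum_{m,n,k,l}\ol{\alpha_{m,n}}\beta_{k,l}(E(B_m)Ke_n,E(B_k)Ke_l)_\cH \\
&=\sum\ol{\alpha_{m,n}}\beta_{k,l}(e_n,K^*E(B_m\cap B_k)Ke_l)_\cK \\
&=\sum\ol{\alpha_{m,n}}\beta_{k,l}(e_n,\Sigma(B_m\cap B_k)e_l)_\cK,
\end{align*}
using $E(B_m)E(B_k)=E(B_m\cap B_k)$ and the definition of $\Sigma$. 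On the other hand, by \eqref{D.14} and the Radon--Nikodym identity \eqref{D.11},
\begin{align*}
(\ul f,\ul g)_{L^2(\bbR;d\mu;\cM_\Sigma)}
&=\sum\ol{\alpha_{m,n}}\beta_{k,l}\int_{B_m\cap B_k}d\mu(\lambda)\,\phi_{n,l}(\lambda)\\
&=\sum\ol{\alpha_{m,n}}\beta_{k,l}(e_n,\Sigma(B_m\cap B_k)e_l)_\cK,
\end{align*}
which matches. In particular $\|\dot U\ul f\|_\cH=\|\ul f\|_{L^2}$, so if two finite linear combinations represent the same element of $L^2(\bbR;d\mu;\cM_\Sigma)$, their difference has $L^2$-norm zero and hence their images agree in $\cH$; this establishes well-definedness on the equivalence classes comprising ${\ul \cV}_\Sigma$.

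Since ${\ul \cV}_\Sigma$ is dense in $L^2(\bbR;d\mu;\cM_\Sigma)$ (a consequence of Lemma \ref{lD.3} and the discussion leading to \eqref{D.19}), the bounded isometry $\dot U$ extends uniquely by continuity to an isometry $U:L^2(\bbR;d\mu;\cM_\Sigma)\to\cH$. The range of $U$ is closed (as an isometry from a complete space), and it contains every $E(B)Ke_n$, $B\in\mathfrak{B}(\bbR)$, $n\in\cI$; the spanning hypothesis on $\{E(B)Ke_n\}$ therefore forces $\ran(U)=\cH$, so $U$ is unitary.

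The only real obstacle is verifying well-definedness on equivalence classes, and the clean way around it is to derive the isometry identity above \emph{first} on the formal finite linear combinations and then use it to conclude that $\dot U$ descends to the quotient. Everything else (boundedness, extension, surjectivity) is then forced by standard Hilbert space continuity and density arguments.
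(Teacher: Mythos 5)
Your argument is correct, and it is the standard (and surely the intended) proof: the paper itself does not reproduce a proof of Lemma \ref{lD.15} but merely cites \cite{GKMT01}, so there is nothing to compare against beyond noting that your computation of the isometry identity via \eqref{D.11}, \eqref{D.14}, and the multiplicativity $E(B_m)E(B_k)=E(B_m\cap B_k)$, followed by the density of ${\ul \cV}_\Sigma$ from \eqref{D.19} and the closed-range/spanning argument for surjectivity, is exactly the expected route. The only point worth flagging is that you implicitly strengthen the stated hypothesis ``$\{E(B)\}_{B\in\mathfrak{B}(\bbR)}$ is a family of orthogonal projections'' to ``$E(\cdot)$ is a projection-valued measure'' (multiplicative and strongly countably additive); this is clearly what is meant, since the lemma is later applied to spectral families $E_L(\cdot)$, and both the measure property of $\Sigma$ and your key inner-product identity genuinely require it.
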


Next, let $H_0$ a self-adjoint (possibly unbounded) operator in $\cH$, $L$ a bounded
self-adjoint operator in $\cK$, and $K:\cK\to \cH$ a bounded operator. 

Define the self-adjoint operator $H_L$ in $\cH$,
\begin{equation}
H_L=H_0+KLK^*, \quad \dom(H_L)=\dom(H_0). 
\end{equation}
Given the perturbation $H_L$ of $H_0$,
 we introduce the associated operator-valued Herglotz function in $\cK$,
\begin{equation}
M_L(z)=K^*(H_L-z)^{-1}K, \quad z \in \bbC\backslash \bbR.  
\end{equation}

Next, let $\{ E_0(\lambda)\}_{\lambda \in \bbR}$ be the family of strongly right-continuous orthogonal spectral projections of $H_0$ in $\cH$ and suppose that $K\cK \subseteq \cH$ 
is a generating subspace for $H_0$, that is, one of the following
(equivalent) equations holds: 
\begin{align}
\cH&=\ol{{\rm lin.span}\{(H_0-z)^{-1}Ke_n\in \cH \, \vert\, n \in \cI,
\, z\in \bbC\backslash \bbR\}}    \\
&=\ol{{\rm lin.span}\{E_0(\lambda)Ke_n\in \cH \, \vert \, n \in \cI, \, \lambda\in \bbR\}}, 
\end{align}
where $\{e_n\}_{n\in\cI}$,  $\cI\subseteq\bbN$ an appropriate index set, represents 
a complete orthonormal system in $\cK$.

Denoting by $\{ E_L(\lambda)\}_{\lambda\in \bbR}$ the family of strongly right-continuous orthogonal spectral projections of $H_L$ in $\cH$ one introduces
\begin{equation}
\Omega_L(\lambda)=K^*E_L(\lambda)K, \quad \lambda\in \bbR,      \lb{2.84}
\end{equation}
and hence verifies
\begin{align}
M_L(z)&=K^*(H_L-z)^{-1}K=K^*\int_\bbR dE_L(\lambda)
(\lambda-z)^{-1}K \no \\
&=\int_\bbR d\Omega_L(\lambda)(\lambda-z)^{-1},
\quad z\in  \bbC\backslash \bbR,    \lb{2.85}
\end{align}
where the operator Stieltjes integral \eqref{2.85} converges in the norm of $\cB(\cK)$ 
(cf.\ Theorems\ I.4.2 and I.4.8 in \cite{Br71}). Since 
$\slim_{z\to i \infty}z(H_L-z)^{-1}=-I_\cH$, \eqref{2.84} implies
\begin{equation}
\Omega_L(\bbR)=K^*K. 
\end{equation}
Moreover, since
$\slim_{\lambda\downarrow -\infty}E_L(\lambda)=0$,
$\slim_{\lambda\uparrow \infty}E_L(\lambda)=I_\cH$,
one infers
\begin{equation}
\slim_{\lambda\downarrow -\infty}\Omega_L(\lambda)=0,
\quad
\slim_{\lambda\uparrow \infty}\Omega_L(\lambda)=K^*K 
\end{equation}
and $\{ \Omega_L(\lambda)\}_{\lambda\in \bbR} \subset\cB(\cK)$ is a family of 
uniformly bounded, nonnegative, nondecreasing, strongly right-continuous operators 
from $\cK$ into itself. Let $\mu_L$ be a $\sigma-$finite control measure on $\bbR$ 
defined, for instance, by
\begin{equation}
\mu_L(\lambda)=\sum_{n\in\cI} 2^{-n}
(e_n,\Omega_L(\lambda)e_n)_\cK, \quad \lambda\in\bbR, 
\end{equation}
where $\{e_n\}_{n\in\cI}$ denotes a complete 
orthonormal 
system in $\cK$,  and then introduce 
$L^2(\bbR; d\mu_L; {\cM}_{\Omega_L}) \equiv L^2(\bbR, d \Omega_L; \cK)$ as in 
Remark \ref{rD.11}\,$(ii)$, replacing the pair $(\Sigma,\mu)$ by $(\Omega_L,\mu_L)$, 
etc.\ Abbreviating $\hatt\cH_L=L^2(\bbR; d \Omega_L; \cK)$, we introduce the unitary 
operator $U_L:\hatt \cH_L\to\cH$, as the operator $U$ in Lemma \ref{lD.15} and define
$\hatt H_L$ in $\hatt{\cH}_L$ by
\begin{equation}
(\hatt H_L \hat f)(\lambda)=\lambda\hat f(\lambda),
\quad
\hat f \in \dom (\hatt H_L)=L^2(\bbR; (1+\lambda^2)d\Omega_L; \cK). 
\end{equation}

The following result yields a spectral representation (diagonalization) of $H_L$:

\begin{theorem} [\cite{GKMT01}] \lb{t2.16} 
The operator $H_L$ in $\cH$ is unitarily equivalent to $\hatt H_L$ in $\hatt{\cH}_L$, 
\begin{equation}
H_L=U_L\hatt H_LU_L^{-1}. 
\end{equation}
The family of strongly right-continuous orthogonal spectral projections 
$\{\hatt E_L(\lambda)\}_{\lambda\in \bbR}$ of $\hatt H_L$ in $\hatt{\cH}_L$ is given by
\begin{equation}
(\hatt E_L(\lambda)\hat f)(\nu)=
\theta(\lambda-\nu)\hat f(\nu)
\text{ for } \Omega_L-\text{a.e. } \nu\in \bbR,
\quad \hat f \in\hatt{\cH}_L, 
\end{equation}
where $\theta(x)=\begin{cases} 1,&x\geq 0, \\
  0,&x < 0. \end{cases}$ 
\end{theorem}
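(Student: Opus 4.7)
The plan is to verify that the unitary $U_L:\hatt\cH_L\to\cH$ from Lemma \ref{lD.15} intertwines the spectral family $\{E_L(\lambda)\}_{\lambda\in\bbR}$ of $H_L$ with the family of multiplication projections on $\hatt\cH_L$; both assertions of the theorem then follow from the spectral theorem.

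First I would record from Lemma \ref{lD.15} (applied with $E=E_L$ and $\Sigma=\Omega_L$) that $U_L$ is obtained by continuous extension of the prescription $\chi_{B'}\ul e_n\mapsto E_L(B')Ke_n$ on the dense subspace ${\ul\cV}_{\Omega_L}$ of $\hatt\cH_L$ (cf.\ \eqref{D.19}). For each $B\in\mathfrak{B}(\bbR)$, let $\hatt P(B)$ denote multiplication by $\chi_B$ on $\hatt\cH_L$, which is an orthogonal projection. Using multiplicativity $E_L(B)E_L(B')=E_L(B\cap B')$ of the spectral family, I compute on the generators
\begin{align*}
U_L\hatt P(B)(\chi_{B'}\ul e_n)&=U_L(\chi_{B\cap B'}\ul e_n)=E_L(B\cap B')Ke_n\\
&=E_L(B)E_L(B')Ke_n=E_L(B)U_L(\chi_{B'}\ul e_n).
\end{align*}
Extending by linearity to ${\ul\cV}_{\Omega_L}$ and then by uniform boundedness of $\hatt P(B)$ and $E_L(B)$ (projections of norm at most one) to all of $\hatt\cH_L$, one obtains $U_L\hatt P(B)U_L^{-1}=E_L(B)$ for every $B\in\mathfrak{B}(\bbR)$.

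Setting $\hatt E_L(\lambda):=\hatt P((-\infty,\lambda])$ then immediately yields the stated formula $(\hatt E_L(\lambda)\hat f)(\nu)=\theta(\lambda-\nu)\hat f(\nu)$ and shows that $\{\hatt E_L(\lambda)\}_{\lambda\in\bbR}$ is the strongly right-continuous resolution of the identity on $\hatt\cH_L$ corresponding to $\{E_L(\lambda)\}_{\lambda\in\bbR}$ under $U_L$. By the spectral theorem, the self-adjoint operator associated to this resolution of the identity is $U_L^{-1}H_LU_L=\int_{\bbR}\lambda\,d\hatt E_L(\lambda)$, which is readily identified with multiplication by the independent variable $\lambda$ on its natural maximal domain $L^2(\bbR;(1+\lambda^2)d\Omega_L;\cK)$; but this is precisely the operator $\hatt H_L$ as defined just before the theorem. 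Hence $H_L=U_L\hatt H_LU_L^{-1}$.

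The only routine technical point I expect to need care is the verification that the multiplication-by-$\chi_B$ family on $\hatt\cH_L=L^2(\bbR;d\Omega_L;\cK)$ indeed constitutes a strongly right-continuous, countably additive, projection-valued measure, and that the associated spectral integral $\int_\bbR\lambda\,d\hatt E_L(\lambda)$ coincides with multiplication by $\lambda$ on the indicated domain. These are standard facts in the scalar $L^2$ case which transfer essentially verbatim to the direct-integral / operator-valued-measure framework established in Theorems \ref{tD.7} and \ref{tD.14}; no deeper obstacle is anticipated.
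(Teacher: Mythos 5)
Your proof is correct and takes the standard (and intended) route: you verify the intertwining relation between multiplication by $\chi_B$ on $\hatt{\cH}_L$ and $E_L(B)$ on the dense generating set ${\ul \cV}_{\Omega_L}$ via multiplicativity of the spectral family, and both assertions of the theorem then follow from the spectral theorem. The paper defers the proof to \cite{GKMT01}, and your argument is essentially the one given there; the only hypothesis you (like the paper) take for granted is that $K\cK$ remains generating for $E_L$ and not merely for $E_0$, which is what makes Lemma \ref{lD.15} applicable to produce the unitary $U_L$ in the first place.
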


For a variety of additional results in this context we refer to \cite{GKMT01}.

\medskip

\noindent 
$\mathbf{(II)}$ {\bf Self-adjoint extensions of symmetric operators.} 

We start by developing the following analog of Lemma \ref{lD.15}: Suppose $\cN$ is a separable complex Hilbert space and $\wti \Sigma:\mathfrak{B}(\bbR) \to \cB(\cN)$ 
a positive measure. Assume 
\begin{equation}
\wti \Sigma (\bbR)=\wti T\geq 0, \quad \wti T\in \cB(\cN),    
\end{equation} 
and let $\ti \mu$ be a control measure for $\wti \Sigma$. Moreover, let $\{u_n\}_{n\in\cI}$,  
$\cI\subseteq\bbN$ be a sequence of linearly independent elements in $\cN$ with the 
property $\ol{{\rm lin.span}\{u_n\}_{n\in\cI}}=\cN$.  As discussed in Theorem \ref{tD.7}, 
this yields a measurable family of Hilbert spaces $\cM_{\wti \Sigma}$ modelled on 
$\ti \mu$ and $\{\cN_\lambda\}_{\lambda\in\bbR}$ and a bounded map 
$\ul \Lambda\in\cB(\cN, L^2(\bbR; d\ti \mu; \cM_{\wti \Sigma})$,  
$\|\ul \Lambda\|_{\cB(\cN, L^2(\bbR; d\ti \mu; \cM_{\wti \Sigma})}=
\|{\wti T}^{1/2}\|_{\cB(\cN)}$, such that 
$\ul \Lambda (\{u_n\}_{n\in\cI})$ generates $\cM_{\wti \Sigma}$ 
and 
\begin{equation}
\ul \Lambda:\cV\to L^2(\bbR; d\ti \mu; \cM_{\wti \Sigma}), \quad 
v\mapsto\ul \Lambda v = \ul v =\{\ul v (\lambda)=v\}_{\lambda\in\bbR},    
\end{equation}
where
\begin{equation}
\cV = {\rm lin.span}\{u_n\}_{n\in\cI}. 
\end{equation}
Each $v\in\cV$ defines an element 
\begin{equation}
\ul {\ul v}=\{\ul {\ul v}(\lambda)=(\lambda-i)^{-1}v\}_{\lambda\in\bbR} 
\in \cS (\{\cN_\lambda\}_{\lambda\in\bbR}),  
\end{equation}
and introducing the weight function
\begin{equation}
w_1(\lambda)=1+\lambda^2, \quad \lambda\in\bbR, 
\end{equation}
and Hilbert space $L^2(\bbR; w_1 d\ti \mu; \cM_{\wti \Sigma})$, one computes 
\begin{equation}
\|\ul {\ul v}\|^2_{L^2(\bbR; d\ti \mu; \cM_{\wti \Sigma})}=\int_\bbR d\ti \mu (\lambda) 
\|\ul v(\lambda)\|^2_{\cN_\lambda} = \big(v,\wti T v\big)_\cN 
= \big\|\big({\wti T}\big)^{1/2} v\big\|^2_\cN. 
\end{equation}
Thus, the linear map
\begin{equation}
\ul {\ul {\dot \Lambda}}:\cV\to L^2(\bbR; w_1 d\ti \mu; \cM_{\wti \Sigma}), \quad 
v\mapsto\ul {\ul {\dot \Lambda}}v=\ul {\ul v}=
\big\{\ul {\ul v}(\lambda)=(\lambda-i)^{-1} v\big\}
_{\lambda\in\bbR} 
\end{equation}
extends to $\ul {\ul \Lambda}\in \cB(\cN, L^2(\bbR; w_1 d\ti \mu; \cM_{\wti \Sigma})$,  
$\|\ul {\ul \Lambda}\|_{\cB(\cN, L^2(\bbR; w_1 d\ti \mu; \cM_{\wti \Sigma})}
= \big\|\big({\wti T}\big)^{1/2}\big\|_{\cB(\cN)}$. Introducing 
\begin{equation}
{\ul {\ul \cV}}_{\wti \Sigma}={\rm lin.span} \big\{\chi_B \ul {\ul {u_n}} \in 
L^2(\bbR; w_1d\ti \mu; \cM_{\wti \Sigma}) \,|\,B\in \mathfrak{B}(\bbR),\, n\in\cI\big\},  
\end{equation}
one infers that ${\ul {\ul \cV}}_{\wti \Sigma}$ is 
dense in $L^2(\bbR; w_1 d\ti \mu; \cM_{\wti \Sigma})$,  
that is,
\begin{equation}
\ol{{\ul {\ul V}}_{\wti \Sigma}}=
L^2(\bbR; w_1 d\ti \mu; \cM_{\wti \Sigma}). 
\end{equation}

\begin{lemma} [\cite{GKMT01}] \lb{l2.17}  
Suppose $\cH$ is a separable complex Hilbert space, $\cN$ a closed linear subspace 
of $\cH$, $P_\cN$ the orthogonal projection in $\cH$ onto $\cN$, 
$\{E(B)\}$,  ${B\in \mathfrak{B}(\bbR)}$ a family of orthogonal projections in $\cH$,  and 
assume
\begin{equation}
\ol{{\rm lin.span}\{E(B)u_n\in\cH\,|\,B\in \mathfrak{B}(\bbR), \,n\in\cI \}}=\cH, 
\end{equation}
with $\{u_n\}_{n\in\cI}$,  $\cI\subseteq\bbN$ a complete orthonormal system in $\cN$. 
Define 
\begin{equation}
\wti \Sigma: \mathfrak{B}(\bbR) \to\cB(\cN), \quad 
\wti \Sigma (B)=P_\cN E(B) P_\cN\big|_\cN, 
\end{equation}
and introduce
\begin{align}
&\dot {\wti U}:{\ul {\ul \cV}}_\Sigma \to \cH, \no \\
&{\ul {\ul \cV}}_\Sigma\ni \sum_{m=1}^M \sum_{n=1}^N 
\alpha_{m,n}\chi_{B_m}\ul {\ul u}_n 
\mapsto \dot {\wti U}\bigg(\sum_{m=1}^M \sum_{n=1}^N 
\alpha_{m,n} 
\chi_{B_m}\ul {\ul u}_n\bigg)  \\
&\hspace*{4.4cm}=\sum_{m=1}^M \sum_{n=1}^N \alpha_{m,n} 
E(B_m)u_n\in\cH, \no \\
&\hspace*{1.55cm} \alpha_{m,n}\in\bbC, \, m=1,\dots,M, 
\, n=1,\dots,N, \, M,N\in\cI. \no
\end{align}
Then $\dot {\wti U}$ extends to a unitary operator  
$\wti U:L^2(\bbR; w_1 d\ti \mu; \cM_{\wti \Sigma})\to\cH$.
\end{lemma}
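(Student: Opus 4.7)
The plan is to mirror the proof of Lemma~\ref{lD.15}: show that $\dot{\wti U}$ is isometric on the dense subspace ${\ul{\ul \cV}}_{\wti \Sigma}$ (which forces it to be well-defined, i.e., independent of the chosen representation), extend by continuity to all of $L^2(\bbR; w_1 d\ti \mu; \cM_{\wti \Sigma})$, and then read off surjectivity from the completeness hypothesis on $\{E(B) u_n\}$. The density of ${\ul{\ul \cV}}_{\wti \Sigma}$ in $L^2(\bbR; w_1 d\ti \mu; \cM_{\wti \Sigma})$ has already been recorded in the paragraph immediately preceding the lemma, so the bounded linear extension theorem supplies the extension once the isometric property is in hand.

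The essential observation is the cancellation $w_1(\lambda) |\lambda - i|^{-2} = (1+\lambda^2)/(1+\lambda^2) = 1$, which is precisely why the weight $w_1$ is paired with the resolvent factor $(\lambda-i)^{-1}$ in the definition of $\ul{\ul v}$. Combining this with the defining relation \eqref{D.9} for $\ul \Lambda$ from Theorem~\ref{tD.7} (applied here to $\wti \Sigma$), one obtains for $B, B' \in \mathfrak{B}(\bbR)$ and $n, n' \in \cI$ that
\begin{equation*}
\big(\chi_B \, \ul{\ul u}_n, \chi_{B'} \, \ul{\ul u}_{n'}\big)_{L^2(\bbR; w_1 d\ti\mu; \cM_{\wti \Sigma})}
= \int_{B \cap B'} d\ti\mu(\lambda) \, \big(\ul u_n(\lambda), \ul u_{n'}(\lambda)\big)_{\cN_\lambda}
= \big(u_n, \wti \Sigma(B \cap B') u_{n'}\big)_{\cN}.
\end{equation*}
Since $u_n, u_{n'} \in \cN$ (so $P_\cN u_n = u_n$) and $\{E(B)\}_{B\in\mathfrak{B}(\bbR)}$ is a family of orthogonal spectral projections satisfying $E(B) E(B') = E(B \cap B')$, the right-hand side equals $\big(u_n, E(B \cap B') u_{n'}\big)_{\cH} = \big(E(B) u_n, E(B') u_{n'}\big)_{\cH}$, which is exactly $\big(\dot{\wti U}(\chi_B \ul{\ul u}_n), \dot{\wti U}(\chi_{B'} \ul{\ul u}_{n'})\big)_{\cH}$. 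Sesquilinear extension then delivers the isometric property of $\dot{\wti U}$ on ${\ul{\ul \cV}}_{\wti \Sigma}$.

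Continuous extension produces an isometry $\wti U\colon L^2(\bbR; w_1 d\ti \mu; \cM_{\wti \Sigma}) \to \cH$. Its range contains ${\rm lin.span}\{E(B) u_n \,|\, B \in \mathfrak{B}(\bbR),\, n \in \cI\}$, which is dense in $\cH$ by hypothesis; being isometric, $\wti U$ also has closed range, so it is surjective and therefore unitary.

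The main delicacy is the identification of $\big(u_n, \wti \Sigma(B \cap B') u_{n'}\big)_{\cN}$ with $\big(E(B) u_n, E(B') u_{n'}\big)_{\cH}$, which simultaneously invokes the role of $P_\cN$ in compressing $E$ to $\wti \Sigma$ and the multiplicativity $E(B) E(B') = E(B \cap B')$ implicit in the assumption that $\{E(B)\}$ is a spectral family. Once this bridge is in place together with the weight cancellation, the remainder is a routine density-and-extension argument, strictly parallel to Lemma~\ref{lD.15}.
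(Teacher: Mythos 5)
Your proof is correct: the weight cancellation $w_1(\lambda)\,|\lambda-i|^{-2}=1$ combined with \eqref{D.9} (applied to $\wti\Sigma$) gives the isometry of $\dot{\wti U}$ on the dense span ${\ul{\ul\cV}}_{\wti\Sigma}$, and the completeness hypothesis on $\{E(B)u_n\}$ together with the closedness of the range of an isometry yields unitarity. The paper itself gives no proof of Lemma \ref{l2.17} (it is quoted from \cite{GKMT01}), but your argument is exactly the expected one, parallel to Lemma \ref{lD.15}, and you correctly flag the one hypothesis left implicit in the statement, namely the multiplicativity $E(B)E(B')=E(B\cap B')$ of the projection family, which is what is actually used and holds in the intended application where $E$ is a spectral measure.
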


Next, let $\dot H:\dom(\dot H)\to \cH$, $\ol{\dom (\dot H)}=\cH$ be a densely 
defined closed symmetric linear operator with equal deficiency indices
$\text{def} (\dot H)=(k,k), k\in \bbN \cup \{\infty \}$.  The deficiency subspaces
$\cN_{\pm}$ of $\dot H$ are given by
\begin{equation}
\cN_{\pm}=\ker({\dot H}^*\mp i), \quad \dim_\bbC (\cN_{\pm})=k. 
\end{equation}
In addition, let $H$ be any self-adjoint extension $H$ of $\dot H$ in $\cH$, 
$\cN$ a closed linear subspace of $\cN_+$, $\cN\subseteq \cN_+$, and 
introduce the Weyl--Titchmarsh operator $M_{H,\cN}(\cdot) \in\cB(\cN)$ associated 
with the pair $(H,\cN)$ by
\begin{align}
M_{H,\cN}(z)&=P_\cN (zH+I_\cH)(H-z)^{-1} P_\cN\big\vert_\cN \no \\
&=zI_\cN+(1+z^2)P_\cN(H-z)^{-1} P_\cN\big\vert_\cN\,, \quad  z\in \bbC\backslash \bbR, 
\end{align}
with $I_\cN$ the identity operator in $\cN$ and $P_\cN$ the orthogonal projection 
in $\cH$ onto $\cN$. The Herglotz property of $M_{H,\cN}(\cdot)$ (i.e., 
$\Im(M_{H,\cN}(z)) \geq 0$ for all $z\in\bbC_+=\{z\in\bbC\,|\, \Im(z)>0\}$) then 
yields (cf., e.g., \cite[Appendix\ A]{GWZ11}) 
\begin{equation}
M_{H,\cN}(z)= \int_\bbR d\Omega_{H,\cN}(\lambda)\bigg[\f{1}{\lambda-z} -
\f{\lambda}{1+\lambda^2}\bigg], \quad z\in\bbC\backslash\bbR,   \lb{2.106}
\end{equation}
where
\begin{align}
&\Omega_{H,\cN}(\lambda)=(1+\lambda^2) 
\big(P_\cN E_H(\lambda)P_\cN\big\vert_\cN\big),    \\
&\int_\bbR d\Omega_{H,\cN}(\lambda) (1+\lambda^2)^{-1}=I_\cN,    \\
&\int_\bbR d(\xi,\Omega_{H,\cN} (\lambda)\xi)_\cN=\infty \text{ for all }
 \xi\in \cN\backslash\{0\},   
\end{align}
and the Herglotz--Nevanlinna representation \eqref{2.106} is valid in the strong
operator topology of $\cN$. 

Next we will prepare some material that eventually
will lead to a model for the pair
$(\dot H, H)$. Let $\{u_n\}_{n\in\cI}$, $\cI\subseteq\bbN$ be a complete 
orthonormal system in $\cN$, $\{\wti \Omega (\lambda)\}_{\lambda\in \bbR}$ a
family of strongly right-continuous nondecreasing $\cB(\cN)$-valued functions
normalized by
\begin{equation}
\wti\Omega (\bbR)=I_\cN, 
\end{equation}
with the property
\begin{equation}
\int_\bbR d(\xi,\wti\Omega (\lambda)\xi)_\cN (1+\lambda^2)=\infty
 \text{ for all }  \xi\in \cN\backslash\{0\}.  
\end{equation}
Introducing the control measure $\ti\mu(B) = 
\sum_{n\in\cI} 2^{-n}(u_n,\wti\Omega(B)u_n)_\cN$, $B\in \mathfrak{B}(\bbR)$, and 
$\ul \Lambda$ as in Theorem \ref{tD.7}, we may define 
$L^p(\bbR; wd\wti\Omega; \cN)$,  $p\ge 1$,  $w\geq0$ an appropriate weight 
function. Of special importance in this section are weight functions 
of the type $w_r (\lambda)=(1+\lambda^2)^r$, $r\in\bbR$,  $\lambda\in\bbR$. 
In particular, introducing 
\begin{equation}
\Omega(B)=\int_B (1+\lambda^2)d\ti\mu(\lambda)
\f{d\wti\Omega}{d\ti\mu}(\lambda), \quad B\in \mathfrak{B}(\bbR), 
\end{equation}
we abbreviate $\hatt{\cH}=L^2(\bbR; d\Omega; \cN)$ and define the self-adjoint operator 
$\hatt H$ in $\hatt{\cH}$,
\begin{equation}
(\hatt H\hat f)(\lambda)=\lambda\hat f(\lambda), \quad \hat f \in \dom (\hatt H)=
L^2(\bbR; (1+\lambda^2) d\Omega; \cN), \lb{2.113}
\end{equation}
with corresponding family of strongly
right-continuous orthogonal
 spectral projections
\begin{equation}
(E_{\hat H}(\lambda)\hat f)(\nu) = \theta(\lambda-\nu)\hat f (\nu)
\text{ for } \Omega-\text{a.e. } \nu \in \bbR, \quad \hat f \in \hatt{\cH}. \lb{2.114}
\end{equation}
Associated with $\hatt H$ we consider the linear operator $\hatt{\dot H}$ in
$\hatt{\cH}$ defined as the following restriction of $\hatt H$
\begin{align}
&\dom (\hatt{\dot H})= \bigg\{\hat f\in \dom (\hatt H) \,\bigg\vert \,  \int_\bbR 
(1+\lambda^2)d\ti\mu(\lambda) (\ul \xi, \hat f (\lambda))_{\cN_\lambda}=0 
 \text{ for all }\ul \xi\in\ul \Lambda(\cN)\bigg\}, \no \\
&\hatt{\dot H}=\hatt H
\big\vert_{\dom(\hat{\dot H})}. \lb{2.115}
\end{align}
Here we used the notation introduced 
in the proof of Theorem \ref{tD.7}, 
\begin{equation}
\ul \xi=\ul \Lambda \xi =\{\ul \xi(\lambda) = \xi\}_{\lambda\in\bbR}.   
\end{equation}

Moreover, introducing the scale of Hilbert spaces
${\hatt{\cH}}_{2r}=L^2(\bbR; (1+\lambda^2)^r d\Omega); \cN$, $r\in \bbR$,
$\hatt{\cH}_0=\hatt{\cH}$, we consider the unitary operator $R$ from $\hatt{\cH}_2$ to
$\hatt{\cH}_{-2}$, 
\begin{align}
&R:\hatt{\cH}_2 \to \hatt{\cH}_{-2},\quad \hat f
\mapsto(1+\lambda^2)\hat f,      \\
&(\hat f, \hat g)_{\hat{\cH}_2} = (\hat f, R \hat g)_{\hat{\cH}}=
(R\hat f, \hat g)_{\hat{\cH}} = (R\hat f, R\hat g)_{{\hat{\cH}}_{-2}},
\quad \hat f, \hat g \in \hatt{\cH} _2,      \\
&(\hat u, \ul v)_{{\hat{\cH}}_{-2}} = (\hat u, R^{-1} \ul v)_{{\hat{\cH}}_{2}}=
(R^{-1}\hat u, \ul v)_{\hat{\cH}} = (R^{-1}\hat u, R^{-1}\ul v)_{\hat{\cH}_2},
\quad \hat u, \ul v \in {\hatt{\cH}}_{-2}.     
\end{align}
In particular,
\begin{equation}
\ul {\ul \Lambda}(\cN)\subset\hatt\cH, \quad
\ul \Lambda(\cN)\subset \hatt{\cH}_{-2}, 
\quad \ul \xi \in\ul \Lambda(\cN)\backslash\{0\}
\Rightarrow \ul \xi\not\in \hatt{\cH}.    
\end{equation} 

\begin{theorem}  [\cite{GKMT01}] \lb{t2.18} 
The operator $\hatt{\dot H}$ in \eqref{2.115} is densely defined symmetric and 
closed in $\hatt{\cH}$. Its deficiency indices are given by
\begin{equation}
\rm{def} (\hatt{\dot H})=(k,k), \quad k = \dim_\bbC(\cN)\in \bbN\cup\{\infty\},  
\end{equation}
and
\begin{equation}
\ker (\hatt{\dot  H}^*-z)=\ol{{\rm lin.span} \big\{\{(\lambda-z)^{-1}e_n\}_{\lambda\in\bbR}
\in \hatt{\cH} \, \big\vert \, n\in \cI \big\}}, \quad z\in\bbC\backslash\bbR. 
\end{equation}
\end{theorem}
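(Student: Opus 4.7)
The plan is to identify $\dom(\hatt{\dot H})$ as the kernel of a bounded linear map from $\hatt{\cH}_2=\dom(\hatt H)$ into $\cN$ and then exploit this description throughout. For $\hatt f\in\hatt{\cH}_2$ and $\xi\in\cN$ the bound $\bigl|\int_\bbR(1+\lambda^2)d\ti\mu(\lambda)(\ul\xi,\hatt f(\lambda))_{\cN_\lambda}\bigr|\le\|\ul\xi\|_{\hatt{\cH}_{-2}}\|\hatt f\|_{\hatt{\cH}_2}=\|\xi\|_{\cN}\|\hatt f\|_{\hatt{\cH}_2}$ produces a bounded linear operator $\Phi\colon\hatt{\cH}_2\to\cN$ determined by $(\eta,\Phi(\hatt f))_\cN=\int_\bbR(1+\lambda^2)d\ti\mu(\ul\eta,\hatt f)_{\cN_\lambda}$, and \eqref{2.115} reads exactly $\dom(\hatt{\dot H})=\ker(\Phi)$. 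Symmetry of $\hatt{\dot H}$ is inherited from the self-adjointness of $\hatt H$, while closedness in $\hatt{\cH}$ follows because $\ker(\Phi)$ is closed in the $\hatt{\cH}_2$-norm, which is equivalent to the graph norm of $\hatt H$.

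For density I would take $\hatt u\in\hatt{\cH}$ orthogonal to $\ker(\Phi)$ in $\hatt{\cH}$. The continuous functional $\hatt f\mapsto(\hatt u,\hatt f)_{\hatt{\cH}}$ on $\hatt{\cH}_2$ vanishes on $\ker(\Phi)$ and hence factors through $\Phi$: by Riesz representation in $\cN$ there exists $\eta_0\in\cN$ with $(\hatt u,\hatt f)_{\hatt{\cH}}=(\eta_0,\Phi(\hatt f))_\cN=(\ul\eta_0,\hatt f)_{\hatt{\cH}}$ for every $\hatt f\in\hatt{\cH}_2$. Nondegeneracy of the rigging pairing $\hatt{\cH}_{-2}\times\hatt{\cH}_2\to\bbC$ then forces $\hatt u=\ul\eta_0$ as elements of $\hatt{\cH}_{-2}$. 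But $\hatt u\in\hatt{\cH}$ while the divergence hypothesis $\int_\bbR(1+\lambda^2)d(\xi,\wti\Omega(\lambda)\xi)_\cN=\infty$ for $\xi\ne0$ guarantees $\ul\eta_0\in\hatt{\cH}$ only when $\eta_0=0$, so $\hatt u=0$.

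The deficiency subspaces are computed by the same factoring idea. For $\hatt g\in\hatt{\cH}$, the condition $\hatt g\in\ker(\hatt{\dot H}^*-z)$ is equivalent to the vanishing of the linear functional $\hatt f\mapsto\int_\bbR d\Omega((\lambda-z)\hatt g,\hatt f)_{\cN_\lambda}$ on $\ker(\Phi)$; note $(\lambda-z)\hatt g\in\hatt{\cH}_{-2}$ because $(1+\lambda^2)^{-1}|\lambda-z|^2$ is bounded on $\bbR$ for $\Im(z)\ne0$. Repeating the factoring/nondegeneracy argument now produces $\xi\in\cN$ with $(\lambda-z)\hatt g=\ul\xi$ in $\hatt{\cH}_{-2}$, that is, $\hatt g(\lambda)=(\lambda-z)^{-1}\xi$; conversely every such element lies in $\hatt{\cH}$ (as $(1+\lambda^2)|\lambda-z|^{-2}$ is bounded) and satisfies the deficiency equation by direct computation. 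Since the map $\cN\ni\xi\mapsto\{(\lambda-z)^{-1}\xi\}_{\lambda\in\bbR}$ is a bounded linear bijection onto $\ker(\hatt{\dot H}^*-z)$, setting $z=\pm i$ yields the deficiency indices $(k,k)$ with $k=\dim_\bbC\cN$, and taking the closure of the span of the images of $\{e_n\}_{n\in\cI}$ recovers the asserted formula. The principal delicate point throughout is the careful use of the rigging $\hatt{\cH}_2\subset\hatt{\cH}\subset\hatt{\cH}_{-2}$ and the nondegeneracy of its pairing, which is what promotes an integral identity on $\ker(\Phi)$ to an actual identification with a concrete constant element $\ul\eta_0$ or $\ul\xi$.
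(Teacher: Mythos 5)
The paper does not actually prove Theorem \ref{t2.18}; it is quoted verbatim from \cite{GKMT01} with no in-text argument, so there is nothing internal to compare your proof against. Taken on its own terms, your proposal is the natural argument and is essentially correct: encoding \eqref{2.115} as $\dom\big(\hatt{\dot H}\big)=\ker(\Phi)$ for a bounded $\Phi\colon\hatt{\cH}_2\to\cN$ gives closedness (since $\ker(\Phi)$ is closed in the graph norm of $\hatt H$) and symmetry (as a restriction of the self-adjoint $\hatt H$), while density and the deficiency subspaces both reduce to identifying the annihilator of $\ker(\Phi)$ inside the rigging with $\ul \Lambda(\cN)$ and then using, respectively, that $\ul \Lambda(\cN)\cap\hatt{\cH}=\{0\}$ and that $(\lambda-z)\hat g=\ul \xi$ in $\hatt{\cH}_{-2}$ forces $\hat g(\lambda)=(\lambda-z)^{-1}\xi$.

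One step should be justified more explicitly, because as written it is false for a general bounded operator: a bounded functional on $\hatt{\cH}_2$ that vanishes on $\ker(\Phi)$ need not factor as $(\eta_0,\Phi(\cdot))_{\cN}$ unless $\ran(\Phi)$ --- equivalently, by the closed range theorem, $\ran(\Phi^*)$ --- is closed. In the present situation this does hold, and the reason is worth recording: under the identification $\hatt{\cH}_2^{\,*}\cong\hatt{\cH}_{-2}$ furnished by the pairing one has $\Phi^*\eta=\ul \eta$, and the normalization $\wti\Omega(\bbR)=I_\cN$ makes $\eta\mapsto\ul \eta$ an isometry of $\cN$ onto $\ul \Lambda(\cN)\subset\hatt{\cH}_{-2}$, so $\ran(\Phi^*)=\ul \Lambda(\cN)$ is complete and hence closed. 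Alternatively you can bypass the factorization altogether: $\ker(\Phi)$ is precisely the pre-annihilator of $\ul \Lambda(\cN)$, and since $R^{-1}\ul \Lambda(\cN)$ is a closed subspace of $\hatt{\cH}_2$, the double-orthogonal-complement argument directly yields $\hat u=\ul \eta_0$ (and, in the deficiency computation, $(\lambda-z)\hat g=\ul \xi$). With that point supplied, the remaining verifications --- boundedness of $(1+\lambda^2)|\lambda-z|^{-2}$ and of its reciprocal for $\Im(z)\neq 0$, injectivity and bicontinuity of $\cN\ni\xi\mapsto\{(\lambda-z)^{-1}\xi\}_{\lambda\in\bbR}$, hence $\mathrm{def}\big(\hatt{\dot H}\big)=(k,k)$ and the stated description of $\ker\big(\hatt{\dot H}^*-z\big)$ --- go through exactly as you describe.
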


Next, we $\cH$ decomposes into the direct orthogonal sum
\begin{equation}
\cH=\cH_0\oplus \cH_0^\bot, \quad \ker ({\dot H}^*-i)\subset \cH_0, \quad
 z\in \bbC\backslash\bbR,
\lb{2.123}
\end{equation}
where $\cH_0$ and $ \cH_0^\bot$ are invariant subspaces for all self-adjoint 
extensions of $\dot H$, that is,
\begin{equation}
(H-z)^{-1}\cH_0\subseteq \cH_0 , \quad
(H-z)^{-1}\cH_0^\bot\subseteq \cH_0^\bot, \quad
 z\in \bbC\backslash\bbR,   
\end{equation}
for all self-adjoint extensions $H$ of $\dot H$ in $\cH$. In the following we call a 
densely defined closed symmetric operator $\dot H$ with deficiency indices 
$(k,k)$, $k\in \bbN\cup\{\infty\}$ {\it prime} if
$\cH_0^\bot=\{0\}$ in the decomposition \eqref{2.123}.

Given these preliminaries we can now recall the model for the pair $(\dot H, H)$:
 
\begin{theorem}  [\cite{GKMT01}] \lb{t2.19} 
Assume $H$ is a self-adjoint extension of $\dot H$ in $\cH$ and let 
$\{ E_H(\lambda) \}_{\lambda\in \bbR}$ be the associated family of strongly 
right-continuous orthogonal spectral projections of $H$ and define
the unitary operator
$\wti U:\hatt{\cH}=L^2(\bbR; d \Omega_{H,\cN_+}; \cN_+)\to \cH$ as the operator 
$\wti U$ in Lemma \ref{l2.17}, where
\begin{equation}
\Omega_{H,\cN_+}(\lambda)=(1+\lambda^2) \big(P_{\cN_+}E_H(\lambda)P_{\cN_+}
\big\vert_{\cN_+}\big),    
\end{equation}
with $P_{\cN_+}$ the orthogonal projection onto $\cN_+ = \ker ( {\dot H}^*-i)$.
Then the pair $(\dot H, H)$ is unitarily equivalent to the pair 
$\big(\hatt{\dot H}, \hatt H\big)$,
\begin{equation}
\dot H=\wti U \hatt{\dot H}{\wti U}^{-1}, \quad H=\wti U\hatt H {\wti U}^{-1},    
\end{equation}
where $\hatt H$ and $\hatt{\dot H}$ are defined in \eqref{2.113}--\eqref{2.115}, and 
$\cN$ is identified with $\cN_+$, etc. Moreover,
\begin{equation}
\wti U\hatt{\cN}_+=\cN_+,     
\end{equation}
where
\begin{equation}
{\hatt \cN}_+=\ol{{\rm lin.span} \big\{\ul{\ul u}_{+,n}
\in \hatt{\cH} \,\big\vert \, \ul{\ul u}_{+,n}(\lambda)=
(\lambda-i)^{-1}u_{+,n}, \, \lambda\in\bbR, \, n\in\cI \big\}},  
\end{equation}
with $\{u_{+,n}\}_{n\in\cI}$ a complete orthonormal system in ${\cN}_+=\ker({\dot H}^*-i)$. 
\end{theorem}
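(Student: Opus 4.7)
The plan is to appeal to Lemma~\ref{l2.17} with $\cN = \cN_+$, family $\{E(B)\} = \{E_H(B)\}$, and $\{u_n\} = \{u_{+,n}\}$ a complete orthonormal system in $\cN_+$, thereby producing the unitary $\wti U$ of the theorem. For Lemma~\ref{l2.17} to apply one needs $\ol{{\rm lin.span}\{E_H(B) u_{+,n}\}} = \cH$; this span equals the invariant subspace $\cH_0$ of \eqref{2.123}, so the hypothesis is equivalent to primeness of $\dot H$, which we assume (otherwise one reduces to the prime part on $\cH_0$ and handles $\cH_0^\bot$ separately, as it is invariant under every self-adjoint extension). With $\wti \Sigma(B) = P_{\cN_+} E_H(B) P_{\cN_+}|_{\cN_+}$ and control measure $\ti\mu$, Remark~\ref{rD.11}\,$(ii)$ together with $d\Omega_{H,\cN_+} = (1+\lambda^2) d\wti\Sigma$ identifies $L^2(\bbR; w_1 d\ti\mu; \cM_{\wti\Sigma})$ with $\hatt \cH = L^2(\bbR; d\Omega_{H,\cN_+}; \cN_+)$, so that Lemma~\ref{l2.17} supplies the unitary $\wti U: \hatt\cH \to \cH$ acting on the dense total set of ``simple tensors'' by $\wti U(\chi_B \ul{\ul u}_{+,n}) = E_H(B) u_{+,n}$.

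To derive $H = \wti U \hatt H \wti U^{-1}$, I would verify that $\wti U$ intertwines the spectral families. Evaluating on the total set and using \eqref{2.114} one computes
\begin{align*}
\wti U \hatt E_{\hatt H}(\lambda) (\chi_B \ul{\ul u}_{+,n})
&= \wti U (\chi_{B \cap (-\infty,\lambda]} \ul{\ul u}_{+,n}) \\
&= E_H\bigl(B \cap (-\infty,\lambda]\bigr) u_{+,n}
= E_H(\lambda) \wti U (\chi_B \ul{\ul u}_{+,n}).
\end{align*}
Closing by continuity yields $\wti U \hatt E_{\hatt H}(\lambda) = E_H(\lambda) \wti U$ on all of $\hatt \cH$; the Borel functional calculus applied to $\lambda \mapsto \lambda$ then produces $H = \wti U \hatt H \wti U^{-1}$ on the domain $\dom(\hatt H) = L^2(\bbR; (1+\lambda^2) d\Omega_{H,\cN_+}; \cN_+)$.

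For $\dot H = \wti U \hatt{\dot H} \wti U^{-1}$, the crucial ingredient is the characterization $f \in \dom(\dot H) \Leftrightarrow f \in \dom(H)$ and $((H+i)f, u_+)_{\cH} = 0$ for all $u_+ \in \cN_+$, which follows from $\ran(\dot H + i) = \cN_+^\bot$ (via $\ker((\dot H+i)^*) = \ker(\dot H^* - i) = \cN_+$). Setting $\hat f = \wti U^{-1} f$ and using $\wti U^{-1} u_{+,n} = \ul{\ul u}_{+,n}$ with $\ul{\ul u}_{+,n}(\lambda) = (\lambda-i)^{-1} u_{+,n}$, this translates to
\begin{align*}
0 &= ((\hatt H + i) \hat f, \ul{\ul u}_{+,n})_{\hatt\cH}   \\
&= \int_\bbR (1+\lambda^2) d\ti\mu(\lambda) \, \overline{(\lambda+i)}(\lambda-i)^{-1} (\hat f(\lambda), u_{+,n})_{\cN_\lambda}   \\
&= \int_\bbR (1+\lambda^2) d\ti\mu(\lambda) \, (\hat f(\lambda), u_{+,n})_{\cN_\lambda},
\end{align*}
since $\overline{\lambda+i}\,(\lambda-i)^{-1} = 1$ for $\lambda \in \bbR$. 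After complex conjugation (which preserves vanishing), this is exactly the condition in \eqref{2.115} defining $\hat f \in \dom(\hatt{\dot H})$ with $\ul\xi = \ul\Lambda u_{+,n}$; since $\dot H \subset H$, the actions then agree via the preceding step.

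Finally, $\wti U \hatt\cN_+ = \cN_+$ is immediate: $\wti U \ul{\ul u}_{+,n} = \wti U(\chi_\bbR \ul{\ul u}_{+,n}) = E_H(\bbR) u_{+,n} = u_{+,n}$, and the identity extends to the closed linear span. The main obstacle will be the $(1+\lambda^2)$-factor bookkeeping in the third step, ensuring that the weight in the inner product on $\hatt\cH$ cancels the $(\lambda-i)$ from $\ul{\ul u}_{+,n}$ and the $(\lambda+i)$ from $(\hatt H + i)$ to reproduce exactly the integrand of \eqref{2.115}; a secondary subtlety is the reduction to the prime case needed to justify the generating hypothesis of Lemma~\ref{l2.17}.
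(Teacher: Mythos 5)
The paper gives no proof of this theorem---it is quoted from \cite{GKMT01}---but your argument is correct and reconstructs essentially the intended route: Lemma \ref{l2.17} supplies $\wti U$, the intertwining of the spectral families on the total set $\{\chi_B\,\ul{\ul u}_{+,n}\}$ gives $H=\wti U\hatt H{\wti U}^{-1}$, and the characterization $\dom(\dot H)=\{f\in\dom(H)\,|\,(H+i)f\perp\cN_+\}$ (from $\ran(\dot H+i)=\cN_+^\perp=\ker(\dot H^*-i)^\perp$) translates, after the $(1+\lambda^2)$ and $(\lambda\pm i)$ cancellation you carry out, into the defining condition \eqref{2.115} for $\dom\big(\hatt{\dot H}\big)$ (the passage from the total set $\{u_{+,n}\}$ to all of $\ul\Lambda(\cN_+)$ being justified by continuity of the $\hatt{\cH}_{-2}$--$\hatt{\cH}_{2}$ pairing). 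Your observation that the generating hypothesis of Lemma \ref{l2.17} amounts to primeness of $\dot H$ is also on target: that hypothesis is implicit in the theorem as stated (and assumed explicitly in \cite{GKMT01}), and without it $\wti U$ is unitary only onto $\cH_0$ in the decomposition \eqref{2.123}.
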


Again, for a variety of additional results in this context we refer to \cite{GKMT01}. We also 
note that additional applications of operator-valued Herglotz functions (such as \eqref{2.85}
and \eqref{2.106}) can be found, for instance, in \cite{AL95}--\cite{ABT11}, 
\cite{BT92}, \cite{BMN02}, 
\cite{Ca76}, \cite{DM91}--\cite{DM97}, \cite{GKMT01}--\cite{GMZ07}, 
\cite{GT00}, \cite{GWZ11}, \cite{Go68}, \cite{KO77}, \cite{KO78}, \cite{MN11}, 
\cite{MN11a}, \cite{Mo09}--\cite{Na94}, \cite{Na74}--\cite{NA75}, \cite{Sh71},  
\cite{Tr00}, 

\medskip

\noindent 
$\mathbf{(III)}$ {\bf Spectral theory for Schr\"odinger operators with operator-valued potentials.} 

Assuming $(a,b)\subseteq\bbR$, and supposing that 
\begin{align} 
& \text{$V:(a,b)\to\cB(\cH)$ is a weakly measurable operator-valued function,}  \no \\
& \|V(\cdot)\|_{\cB(\cH)}\in L^1_\loc((a,b);dx),   \lb{2.129} \\
& V(x) = V(x)^* \,  \text{ for a.e.\ $x \in (a,b)$,}   \no 
\end{align} 
we recently developed Weyl--Titchmarsh 
theory for certain self-adjoint operators $H_{\alpha}$ in $L^2((a,b);dx;\cH)$ associated 
with the operator-valued differential expression $\tau =-(d^2/dx^2)+V(\cdot)$ 
(cf.\ \cite{GWZ11}). These are suitable restrictions of the {\it maximal} operator 
$\oT_{\max}$ in $L^2((a,b);dx;\cH)$ defined by
\begin{align}
& \oT_{\max} f = \tau f,   \no \\
& f\in \dom(\oT_{\max})=\big\{g\in L^2((a,b);dx;\cH) \,\big|\, g\in 
W^{2,1}_{\rm loc}((a,b);dx;\cH); \\ 
& \hspace*{6.6cm} \tau g\in L^2((a,b);dx;\cH)\big\}.    \no 
\end{align}
In particular, assuming in addition that $a$ is a regular endpoint for $\tau$ and $b$ 
is of limit-point type for $\tau$, the operator $H_\alpha$ defined as the restriction of
$\oT_{\max}$ by 
\begin{equation}
\dom(\oT_{\alpha})=\{u\in\dom(\oT_{\max}) \,|\, \sin(\alpha)u'(a) + \cos(\alpha)u(a)=0\}.  
\lb{3.29A}
\end{equation}
where $\alpha = \alpha^* \in\cB(\cH)$ is self-adjoint in $L^2((a,b);dx;\cH)$. Conversely, 
all self-adjoint restrictions of $\oT_{\max}$ arise in this manner. 

Introducing $\theta_\alpha(z,\cdot, x_0,), \phi_\alpha(z,\cdot,x_0)$ as those 
$\cB(\cH)$-valued solutions of $\tau Y=z Y$ which satisfy the initial conditions
\begin{equation}
\theta_\alpha(z,x_0,x_0)=\phi'_\alpha(z,x_0,x_0)=\cos(\alpha), \quad
-\phi_\alpha(z,x_0,x_0)=\theta'_\alpha(z,x_0,x_0)=\sin(\alpha),     \lb{2.131}
\end{equation}
one of the principal results in \cite{GWZ11} establishes the existence of 
$\cB(\cH)$-valued Weyl--Titchmarsh solutions $\psi_{\alpha}(z,\cdot)$ of 
$\tau Y=z Y$ of the form
\begin{equation} \label{3.58A}
\psi_{\alpha}(z,x)=\theta_{\alpha}(z,x,a)+\phi_{\alpha}(z,x,a)m_{\alpha}(z),
\quad z \in \bbC\backslash\bbR, \; x \in [a,b), 
\end{equation}
satisfying 
\begin{equation}
\int_a^b dx \, \|\psi_{\alpha}(z,x) f\|_{\cH}^2 < \infty, \quad
f \in \cH, \; z \in \bbC\backslash\bbR.
\end{equation}
Moreover, the $\cB(\cH)$-valued-valued Weyl--Titchmarsh function $m_{\alpha}(\cdot)$ is 
shown to be a Herglotz function in \cite{GWZ11}. Consequently, $m_{\alpha}(\cdot)$ 
permits the Herglotz--Nevanlinna representation (cf.\ \cite[Appendix\ A]{GWZ11})
\begin{align}
& m_{\alpha}(z) = C_{\alpha} + D_{\alpha} z 
+ \int_{\bbR} d\Omega_{\alpha} (\lambda ) \bigg[\f{1}{\lambda-z}
- \f{\lambda}{1+\lambda ^2}\bigg],
\quad z\in\bbC_+,    \lb{A.42} \\
& \Omega_{\alpha} ((-\infty, \lambda]) = \slim_{\varepsilon \downarrow 0}
\int_{-\infty}^{\lambda + \varepsilon} \f{d \Omega_{\alpha} (t)}{1 + t^2},  \quad 
\lambda \in \bbR,  \\
& \Omega_{\alpha} (\bbR) = \Im(m_{\alpha} (i)) 
= \int_{\bbR} \f{d\Omega_{\alpha} (\lambda)}{1+\lambda^2} \in \cB(\cH),   \lb{A.42a} \\
& C_{\alpha} = \Re(m_{\alpha} (i)),\quad D_{\alpha} = \slim_{\eta\uparrow \infty} \,
\frac{1}{i\eta}m_{\alpha} (i\eta) \geq 0,      \lb{A.42b}
\end{align}
valid in the strong sense in $\cH$. The function $m_{\alpha}(\cdot)$ contains all the 
spectral information of $H_{\alpha}$ and is closely related to the Green's 
function of $H_{\alpha}$ as discussed in \cite{GWZ11}.   

Introducing the Hilbert space $L^2(\bbR; d\Omega_{\alpha}; \cH)$, the spectral 
representation (resp., model representation) of $H_{\alpha}$ then aims at exhibiting 
the unitary equivalence of $H_{\alpha}$ with the operator of multiplication by the 
independent variable in $L^2(\bbR; d\Omega_{\alpha}; \cH)$. Under stronger 
hypotheses on $V$ than those recorded in \eqref{2.129}, for instance, continuity 
of $V(\cdot)$ in $\cB(\cH)$, such a result has been shown by Rofe-Beketov \cite{Ro60} 
and Gorba{\v c}uk \cite{Go68}, and subsequently, under 
hypotheses close to those in \eqref{2.129}, by Saito \cite{Sa71}. Our own approach to 
this circle of ideas is in preparation \cite{GWZ12}. 

We conclude this section by noting that certain classes of unbounded operator-valued 
potentials $V$ lead to applications to multi-dimensional Schr\"odinger operators in 
$L^2(\bbR^n; d^n x)$, $n \in \bbN$, $n \geq 2$. It is precisely the connection between 
multi-dimensional Schr\"odinger operators and one-dimensional Schr\"odinger operators 
with unbounded operator-valued potentials which originally motivated our interest in this 
area.

\appendix
\section{Completion of Semi-Metric Spaces} \lb{sA}
\setcounter{theorem}{0}
\setcounter{equation}{0}

In this appendix we establish the equivalence of two approaches to the procedure of completion of semi-metric spaces. For background material on semi-metric spaces we 
refer to \cite[Ch.\ 9]{Wi83}.

We start by recalling that a semi-metric space $(S,\rho)$ is a set $S$ with a semi-metric
$\rho: S \times S \to [0,\infty)$, satisfying
\begin{align}
& \rho(x,x)=0, \quad x \in S,    \lb{E.0a} \\
& \rho(x,y)=\rho(y,x), \quad x,y \in S,     \lb{E.0b} \\
& \rho(x,y)\leq\rho(x,z)+\rho(z,y), \quad x,y,z\in S.     \lb{E.0c}
\end{align}
We point out that a semi-metric space may have two distinct elements $x,y\in S$ with $\rho(x,y)=0$. Nevertheless, one can introduce the notion of Cauchy sequences and limits in a semi-metric space
$(S,\rho)$ as usual. Of course, in this case convergent sequences may have several distinct limits. A semi-metric space $(S,\rho)$ is called {\it complete} if every Cauchy sequence of points in $S$ has a limit which also lies in $S$. Next, we discuss two approaches to completion of a semi-metric space.

\noindent

$\mathbf{(I)}$ Given a semi-metric space $(S,\rho)$, one introduces a semi-metric space
$\big(\wti S_1,\wti\rho_1\big)$, where $\wti S_1$ is the set of all Cauchy sequence in $S$ and
\begin{align}
\wti \rho_1(\wti x,\wti y)=\lim_{n\to\infty}\rho(x(n),y(n)), \quad
\wti x=\{x(n)\}_{n\in\bbN}, \wti y=\{y(n)\}_{n\in\bbN}\in\wti S_1.     \lb{E.2}
\end{align}
It follows from the triangle  inequality \eqref{E.0c} for $\rho$ that
\begin{equation}
|\rho(x(n),y(n))-\rho(x(m),y(m))|\leq\rho(x(n),x(m))+\rho(y(n),y(m)),
\end{equation}
and hence for all $\wti x,\wti y\in\wti S_1$ the sequence $\{\rho(x(n),y(n))\}_{n\in\bbN}$ is Cauchy in $\bbR$. Thus, the limit in \eqref{E.2} exists, and using \eqref{E.2} one verifies that $\wti \rho_1$ is a semi-metric on $\wti S_1$. Moreover, it has been shown in 
\cite[p.\ 176]{Wi83} that $\big(\wti S_1,\wti \rho_1\big)$ is a complete semi-metric space and that $(S,\rho)$ is isometric to a dense subset in $\big(\wti S_1,\wti \rho_1\big)$. Introducing an equivalence relation $\sim$ on $\wti S_1$ by
\begin{equation}
\wti x\sim \wti y \, \text{ whenever } \, \wti \rho_1(\wti x,\wti y)=0, \quad \wti x,  \wti y \in \wti S_1,
\end{equation}
one defines the set of equivalence classes
\begin{equation}
S_1 = \wti S_1/{\sim} = \{[\wti x]_{\wti p_1} \,|\, \wti x\in\wti S_1\}
\end{equation}
and a metric on $S_1$ by
\begin{equation}
\rho_1([\wti x]_{\wti \rho_1},[\wti y]_{\wti \rho_1})=\wti \rho_1(\wti x,\wti y), \quad
[\wti x]_{\wti \rho_1}, [\wti y]_{\wti \rho_1} \in S_1.
\end{equation}
It follows from the triangle inequality for $\wti \rho_1$ that
\begin{equation}
\wti \rho_1(\wti x_1,\wti y_1)=\wti \rho_1(\wti x_2,\wti y_2) \, \text{ whenever } \,
\wti x_1\sim \wti x_2 \, \text{ and } \, \wti y_1\sim \wti y_2,
\end{equation}
and hence $\rho_1$ is a well-defined metric on $S_1$. Thus, $(S_1,\rho_1)$
is a complete metric space, and $(S,\rho)$ is isometric to a dense subset of $(S_1,\rho_1)$.

Now, we consider a different approach:

\noindent
$\mathbf{(II)}$ Given a semi-metric space $(S,\rho)$, define an equivalence relation $\sim$ on $S$ by
\begin{equation}
x\sim y \, \text{ whenever } \, \rho(x,y)=0, \quad x, y \in S,
\end{equation}
and the set of equivalence classes
\begin{equation}
M = S/{\sim} = \{[x]_{\rho} \,|\, x\in S\}.
\end{equation}
It follows from the triangle  inequality for $\rho$ that for all 
$x_1,x_2,y_1,y_2\in S$
\begin{equation}
\rho(x_1,y_1)=\rho(x_2,y_2) \, \text{ whenever } \, 
x_1\sim x_2 \, \text{ and } \, y_1\sim y_2.
\end{equation} 
Hence, the function
\begin{equation}
d([x]_{\rho},[y]_{\rho})=\rho(x,y), \quad [x]_{\rho}, [y]_{\rho} \in M,
\end{equation}
is a well-defined metric on $M$. One completes the metric space $(M,d)$ by introducing the set
$\hatt S_2$ of all Cauchy sequences of points in $M$ and a semi-metric
\begin{align}
\hatt \rho_2(\hatt x,\hatt y) = \lim_{n\to\infty}d([x]_\rho(n),[y]_\rho(n)),
\quad  \hatt x=\{[x]_\rho(n)\}_{n\in\bbN},  \hatt y=\{[y]_\rho(n)\}_{n\in\bbN}\in\hatt S_2.     \lb{E.1}
\end{align}
Thus, $\big(\hatt S_2,\hatt \rho_2\big)$ is a complete semi-metric space. Introducing an equivalence relation $\sim$ on $\hatt S_2$ by
\begin{equation}
\hatt x\sim \hatt y \, \text{ whenever } \, \hatt \rho_2(\hatt x,\hatt y)=0, \quad
\hatt x, \hatt y \in \hatt S_2,
\end{equation}
one defines the set of equivalence classes
\begin{equation}
S_2 = \hatt S_2/{\sim} = \{[\hatt x]_{\hatt \rho_2} \,|\, \hatt x\in\hatt S_2\}
\end{equation}
and a metric on $S_2$ by
\begin{equation}
\rho_2([\hatt x]_{\hatt \rho_2},[\hatt y]_{\hatt \rho_2})=\hatt \rho_2(\hatt x,\hatt y), \quad
[\hatt x]_{\hatt \rho_2}, [\hatt y]_{\hatt \rho_2} \in S_2.
\end{equation}
Again, it follows from the triangle  inequality for $\hatt \rho_2$ that 
$\rho_2$ is a well-defined metric on $S_2$. Thus, $(S_2,\rho_2)$ is a complete metric space, and $(M,d)$ and hence $(S,\rho)$ are isometric to a dense subset of $(S_2,\rho_2)$.

The main result of this appendix is the following isometry lemma.

\begin{lemma} \lb{lE.1}
The metric spaces $(S_j,\rho_j)$, $j=1,2$, introduced in steps $(I)$ and $(II)$ above are isometric
$($i.e., there exists an isometric bijection $J:S_1\to S_2$$)$.
\end{lemma}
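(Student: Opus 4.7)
The plan is to construct an explicit isometric bijection $J: S_1 \to S_2$ using the most natural map suggested by the two constructions: given a Cauchy sequence $\wti x = \{x(n)\}_{n\in\bbN}$ in $(S,\rho)$ (so $[\wti x]_{\wti\rho_1} \in S_1$), take the componentwise quotient $\hatt x = \{[x(n)]_\rho\}_{n\in\bbN}$, which is a Cauchy sequence in $(M,d)$ since $d([x(n)]_\rho,[x(m)]_\rho) = \rho(x(n),x(m))$, and then set
\begin{equation}
J\bigl([\wti x]_{\wti\rho_1}\bigr) = [\hatt x]_{\hatt\rho_2}.
\end{equation}

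First I would verify that $J$ is well-defined on equivalence classes: if $\wti x \sim \wti y$ in $\wti S_1$, then $\wti\rho_1(\wti x,\wti y) = \lim_{n\to\infty}\rho(x(n),y(n)) = 0$, and since $d([x(n)]_\rho,[y(n)]_\rho) = \rho(x(n),y(n))$, this forces $\hatt\rho_2(\hatt x,\hatt y) = 0$, so $\hatt x \sim \hatt y$ in $\hatt S_2$. The isometry property is then immediate from the same identity:
\begin{equation}
\rho_2\bigl(J([\wti x]_{\wti\rho_1}), J([\wti y]_{\wti\rho_1})\bigr) = \hatt\rho_2(\hatt x,\hatt y) = \lim_{n\to\infty} \rho(x(n),y(n)) = \wti\rho_1(\wti x,\wti y) = \rho_1([\wti x]_{\wti\rho_1},[\wti y]_{\wti\rho_1}).
\end{equation}
Isometry of $J$ automatically yields injectivity, since $\rho_2(J(\cdot),J(\cdot))=0$ implies $\rho_1(\cdot,\cdot)=0$, which forces equality of equivalence classes in the metric space $S_1$.

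It remains to show surjectivity. Given $[\hatt y]_{\hatt\rho_2} \in S_2$ represented by a Cauchy sequence $\hatt y = \{[y(n)]_\rho\}_{n\in\bbN}$ in $(M,d)$, I choose a representative $y(n) \in [y(n)]_\rho \subseteq S$ for each $n$. Then
\begin{equation}
\rho(y(n),y(m)) = d([y(n)]_\rho,[y(m)]_\rho),
\end{equation}
so $\wti y = \{y(n)\}_{n\in\bbN}$ is a Cauchy sequence in $(S,\rho)$, i.e., $\wti y \in \wti S_1$, and by construction $J([\wti y]_{\wti\rho_1}) = [\hatt y]_{\hatt\rho_2}$.

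There is no substantial obstacle: every ingredient reduces to the defining identity $d([x]_\rho,[y]_\rho) = \rho(x,y)$ together with the fact that both $\wti\rho_1$ and $\hatt\rho_2$ are defined as limits of the same scalar sequence once representatives are chosen. The only mildly delicate point is the choice of representatives in the surjectivity step, which is a single countable choice and presents no difficulty. Thus the two completion procedures yield canonically isometric metric spaces.
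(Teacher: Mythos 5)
Your proof is correct and follows essentially the same route as the paper's: the same componentwise quotient map $\wti x \mapsto \hatt x = \{[x(n)]_\rho\}_{n\in\bbN}$, the same chain of identities establishing well-definedness and the isometry property, and injectivity from the fact that $\rho_1$ is a metric. The only difference is that you spell out the surjectivity step (choosing representatives $y(n)\in[y(n)]_\rho$ and noting the resulting sequence is Cauchy in $(S,\rho)$), which the paper leaves as an assertion; this is a welcome bit of extra detail rather than a divergence.
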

\begin{proof}
To establish an isometric bijection $T:S_1\to S_2$ one proceeds as follows: For every element
$\wti x=\{x(n)\}_{n\in\bbN}\in\wti S_1$ one defines $\hatt x\in\hatt S_2$ by
$\hatt x=\{[x(n)]_\rho\}_{n\in\bbN}$. Then
\begin{align}
\begin{split}
& \rho_1([\wti x]_{\wti \rho_1},[\wti y]_{\wti \rho_1}) = \wti \rho_1(\wti x,\wti y)
= \lim_{n\to\infty}\rho(x(n),y(n))
\\
& \quad = \lim_{n\to\infty}d([x]_\rho(n),[y]_\rho(n)) = \hatt \rho_2(\hatt x,\hatt y)
= \rho_2([\hatt x]_{\hatt \rho_2},[\hatt y]_{\hatt \rho_2}).       \lb{E.3}
\end{split}
\end{align}
Since $\rho_2$ is a metric, it follows from \eqref{E.3} that for every $\wti x,\wti y\in \wti S_1$ with $[\wti x]_{\wti \rho_1}=[\wti y]_{\wti \rho_1}$ the above construction yields $[\hatt x]_{\hatt \rho_2}=[\hatt y]_{\hatt \rho_2}$. Thus, there is no ambiguity in defining an isometry
\begin{equation}
J: \begin{cases} S_1\to S_2 \\
[\wti x]_{\wti \rho_1} \mapsto J([\wti x]_{\wti \rho_1}) = [\hatt x]_{\hatt \rho_2}
\end{cases}
\end{equation}
(i.e., $J$ is well-defined). It follows from the above constructions that the domain of $J$ is all of
$S_1$ and the range is all of $S_2$. Moreover, since $\rho_1$ is a metric, \eqref{E.3} implies
that $J$ is one-to-one and hence a bijection.
\end{proof}

\begin{remark}
In the case of $(S,\rho)$ being a semi-normed vector space, that is,
\begin{equation}
\rho(x,y)=\|x-y\|, \quad x, y \in S,
\end{equation}
the spaces $(S_1,\rho_1)$ and $(S_2,\rho_2)$ become Banach spaces, and  in this case
\begin{equation}
S_1=\wti S_1 / \ker(\wti \rho_1), \quad M=S/\ker(\rho), \quad S_2=\hatt S_2 / \ker(\hatt \rho_2),
\end{equation}
where $\ker(\wti \rho_1)$, $\ker(\hatt \rho_2)$, and $\ker(\rho)$ denote the linear subspaces of elements of zero norm,
\begin{equation}
\ker(\rho)=\{x\in S \,|\, \|x\|=0\},
\end{equation}
and similarly for $\ker(\wti \rho_1)$ and $\ker(\hatt \rho_2)$. In addition, the analog of the isometry
$J$ in Lemma \ref{lE.1} now also becomes a linear map in the context of normed spaces.
\end{remark}

\medskip

{\bf Acknowledgments.} 
We are indebted to Nigel Kalton, Konstantin Makarov, Mark Malamud, and Eduard Tsekanovskii for numerous helpful discussions on various aspects of this subject. 

This paper is dedicated to Jerry Goldstein on the occasion of his 70th birthday. Jerry 
has been a mentor and friend to us for many years; his enthusiasm for mathematics has 
been infectious and will always remain an inspiration to us.


\end{document}